\documentclass[12pt,a4paper]{article}
\usepackage{amssymb}
\usepackage[english]{babel}
\usepackage{amsmath}
\usepackage{amsthm}
\usepackage{amsfonts}
\usepackage[latin1]{inputenc}
\setlength{\textwidth}{17cm}
\setlength{\oddsidemargin}{-.5cm}

\newtheorem{theorem}{Theorem}
\newtheorem{lemma}{Lemma}
\newtheorem{prop}{Proposition}
\newtheorem{cor}{Corollary}
\newtheorem{defi}{Definition}
\newtheorem{remark}{Remark}

\newcommand{\R}{\mathbb{R}}
\newcommand{\N}{\mathbb{N}}

\newcommand{\sfe}{{\mathbb S}^{n-1}}
\newcommand{\ml}{{\mathcal H}^{n}}
\newcommand{\mh}{{\mathcal H}^{n-1}}

\newcommand{\diam}{{\rm diam}}
\newcommand{\nt}{{\rm n.t.}}
\newcommand{\cl}{{\rm cl}}
\newcommand{\interno}{{\rm int}}

\def\<{\langle}
\def\>{\rangle}

\begin{document}

\title{The Minkowski problem for the torsional rigidity}
\author{Andrea Colesanti \& Michele Fimiani}
\date{}

\maketitle
\begin{abstract}
\noindent We prove the existence and uniqueness up to translations of the
solution to a Minkowski type problem for the torsional 
rigidity in the class of open bounded convex subsets of $\R^n$. For the
existence part we apply the variational method introduced by Jerison in:
Adv. Math. 122 (1996), pp. 262--279. Uniqueness follows from the
Brunn--Minkowski inequality for the torsional rigidity and corresponding
equality conditions.  
\end{abstract}

\bigskip

\noindent
{\em AMS 2000 Subject Classification}: 35J05 (primary), 52A20, 52A40, 49N99.

\section{Introduction}\label{intro}

One possible formulation of the Minkowski problem for a set functional $\bf F$ 
is: {\em find (uniquely) a convex set $\Omega$ given 
  the first variation of $\bf F$ as a function of the outer normals to
  $\partial\Omega$}. 

In more precise terms, let $\bf F$ be a real--valued functional defined for every
open bounded convex subset $\Omega$ of $\R^n$ (many Minkowski
  problems are posed in the class of {\it convex
    bodies}, i.e. compact convex subsets of $\R^n$; here we prefer to consider
open sets because the torsional rigidity is usually defined for this type of sets).
Assume that $\bf F$ is positively
homogeneous of some degree $\alpha\ne0$. 
In many examples we see that
associated to $\Omega$ there
exists a non--negative Borel measure $\mu_{{\bf F},\Omega}$ on the unit sphere
$\sfe$ of $\R^n$ such that: ({\em i}) a representation formula holds
\begin{equation}
\label{1.0}
{\bf F}(\Omega)=\frac{1}{\alpha}\int_{\sfe}h(X)\,d\mu_{{\bf F},\Omega}(X)\,,  
\end{equation}
where $h$ is the {\em support function} of (the closure of) $\Omega$;
({\em ii}) a Hadamard's variational formula holds
\begin{equation}
  \label{1.0b}
\left.\frac{d}{dt}{\bf F}(\Omega+t\Omega^\prime)\right|_{t=0^+}=
\int_{\sfe}h^\prime(X)\,d\mu_{{\bf F},\Omega}(X)\,,  
\end{equation}
for every $\Omega^\prime\subset\R^n$ with the same features as
$\Omega$, with corresponding support function $h^\prime$. The last formula
indicates that $\mu_{{\bf F},\Omega}$ is the first 
variation of $\bf F$ when we endow the class of convex sets with the usual
Minkowski addition. If $\bf F$ is translation invariant (which is the
case in all known examples), then a simple
consequence of (\ref{1.0b}) is the vector equation
\begin{equation}
  \label{1.0c}
\int_{\sfe}X\,d\mu_{{\bf F},\Omega}(X)=0\,.  
\end{equation}
Another ``typical'' condition is that:
\begin{equation}
\label{1.0d}
\mbox{$\mu_{{\bf F},\Omega}$ is not supported on any great
sub--sphere of $\sfe$.}
\end{equation}

\bigskip

\noindent{\bf The Minkowski problem for $\bf F$.} {\em Given a non--negative Borel
  measure $\mu$ on $\sfe$ which fulfills conditions (\ref{1.0c}) and
  (\ref{1.0d}), find $\Omega$ such that $\mu=\mu_{{\bf F},\Omega}$.}

\bigskip

When $\bf F$ is the $n$--dimensional volume, this is the classical Minkowski
problem. In this case $\mu_{{\bf F},\Omega}$ is the {\em
  area measure} of the closure of $\Omega$, i.e., for every Borel subset of $\sfe$
$$
\mu_{{\bf F},\Omega}(\eta)=\mh(\nu^{-1}(\eta))
$$
where $\mh$ is the $(n-1)$--dimensional Hausdorff measure and, for
$x\in\partial\Omega$, $\nu(x)$  is the outer unit normal to $\partial\Omega$
at $x$, which is defined for $\mh$--a.e. $x\in\partial\Omega$.
In particular, if $\partial\Omega$ is of class $C^2$ with everywhere positive
Gauss curvature, then 
$$
d\mu_{{\bf F},\Omega}(X)=\frac{1}{\kappa(X)}\,d\mh(X)\,,
$$
where
$\kappa(X)$ is the Gauss curvature at the point $x\in\partial\Omega$ where
$\nu(x)=X$. Hence prescribing the area measure is equivalent to 
assign the Gauss curvature as a function of the outer unit normal. The
classical Minkowski problem is completely solved: we have existence,
and uniqueness up to translations of the solution, and regularity depending on
the smoothness of the datum $\mu$. We refer the 
reader to \cite[Chapter 7]{Schneider} and \cite{Cheng-Yau} for a detailed
presentation of these results.  

In the paper \cite{Jerison2} Jerison proved the validity of (\ref{1.0}) and (\ref{1.0b})
when $\bf F$ is the electrostatic capacity and he established the existence and 
uniqueness up to translations of the solution of the Minkowski problem for
this functional (see also \cite{Caffarelli-Jerison-Lieb}). In a
subsequent paper (\cite{Jerison3}) he gave a new proof of this result (the existence part
only) using a variational approach based on a rather delicate extension of
(\ref{1.0b}). Namely, he proved that for every $f\in C(\sfe)$ (when $\bf F$ is the capacity) 
\begin{equation}
  \label{1.0f}
\left.\frac{d}{dt}{\bf F}(\Omega_t)\right|_{t=0^+}=
\int_{\sfe}f(X)\,d\mu_{{\bf F},\Omega}(X)\,,
\end{equation}
where $\Omega_t$ is determined as follows:
$$
\Omega_t=\{x\in\R^n\,:\,\langle x,X\rangle<
h(X)+tf(X)\,,\;\forall\,X\in\sfe\}
$$
(here we have to assume that the origin belongs to $\Omega$). $\Omega_t$ is
non--empty, open, bounded and convex if $|t|$ is 
sufficiently small; in particular if $f$ is a support function, then the
support function of $\Omega_t$ is $h+tf$, so that (\ref{1.0f})  extends
(\ref{1.0b}). Once (\ref{1.0f}) is established, Jerison solves the variational problem 
\begin{equation}\label{1.0e}
m_{\bf F}=\inf\left\{\int_{\sfe}h_L(X)\,d\mu(X)\,|\,
\mbox{$L$ compact and convex, $\interno(L)\ne\emptyset$, ${\bf F}(\interno(L))\ge 1$}\right\}\,, 
\end{equation}  
(``${\rm int}$'' denotes the interior) and using (\ref{1.0f}) he proves that the Euler--Lagrange equation of
(\ref{1.0e}) is nothing but 
$$
\lambda\mu=\mu_{{\bf F},\Omega}\,,\quad\Omega=\interno(L)\,,
$$
where $\lambda>0$ is a Lagrange multiplier. In this way he obtains the
existence of a solution to the Minkowski problem, 
given by a suitable rescaling of $L$. In
\cite{Jerison2} this method 
is also applied to the Minkowski problem for the transfinite diameter ($n=2$) and for
the first eigenvalue of the Laplace operator with Dirichlet boundary condition. 
It is worth noticing that (\ref{1.0f}) is valid in the case of the volume as
well (see \cite[Lemma 6.5.3]{Schneider}) and the above technique can be
successfully applied to the classic Minkowski problem. 

In this paper, motivated by the work of Jerison, we consider the case when
$\bf F$ is the torsional rigidity. Let us recall that the torsional rigidity
$\tau(\Omega)$ of an open bounded subset $\Omega$ of $\R^n$ (with some 
basic boundary regularity) can be defined as
$$
\tau(\Omega)=\int_\Omega|\nabla u|^2\,dx\,,
$$
where $u$ is the solution of the boundary--value problem
\begin{equation}
\label{1.2}
\left\{
  \begin{array}{ll}
  \Delta u=-2\quad&\mbox{in}\;\Omega\,,\\
  u=0\quad&\mbox{on}\;\partial\Omega\,.
   \end{array}
\right.
\end{equation}
When $\Omega$ is convex, using a result of Dahlberg it can be proved that
$\nabla u$ is defined $\mh$--a.e. on $\partial\Omega$ and $\nabla u\in
L^2(\partial\Omega)$ (see \S \ref{sec2}). In particular the following measure
can be defined on $\sfe$ 
$$
\mu_{\tau,\Omega}(\eta)=\int_{\nu^{-1}(\eta)}|\nabla u|^2\,d\mh\,,
$$
for every Borel subset $\eta$ of $\sfe$. Our first step (see
\S\ref{sec2}) is to prove the validity of 
(\ref{1.0}) and (\ref{1.0b}) for $\bf F=\tau$. Let us remark that
under suitable smoothness assumptions on $\partial\Omega$ these formulas can
be deduced by (a clever use of) the Divergence Theorem (we refer to
\cite{Colesanti} and \cite{Fimiani} for the details). On the other hand the
extension to the general case requires several technical steps. In \S
\ref{sec3} we establish (\ref{1.0f}) for the torsional rigidity. Here the
crucial tool is a geometric lemma proved in \cite{Jerison2}. Collecting these
results and following Jerison's variational approach we obtain an
existence theorem for the Minkowski problem for $\tau$.

\begin{theorem}\label{teo1.1} 
Let $\mu$ be a non--negative Borel measure on $\sfe$;
  assume that 
\begin{equation}\label{1.1}
\int_{\sfe}X\,d\mu(X)=0\,,
\end{equation}
and that the support of $\mu$ is not contained in any great sub--sphere of
$\sfe$. Then there exists an open bounded convex subset $\Omega$ of $\R^n$
such that $\mu=\mu_{\tau,\Omega}$.
\end{theorem}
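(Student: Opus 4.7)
My plan is to follow the variational approach of Jerison outlined in the introduction, relying on the three ingredients the paper prepares for $\tau$: the representation formula (\ref{1.0}), the Hadamard variation (\ref{1.0b}), and, crucially, its extension (\ref{1.0f}) to arbitrary continuous test functions. The main step is to solve the constrained minimization
\[
m_\tau=\inf\left\{\int_{\sfe}h_L(X)\,d\mu(X)\,:\,L\subset\R^n\text{ compact convex},\ \interno(L)\ne\emptyset,\ \tau(\interno(L))\ge 1\right\},
\]
to show that a minimizer $L_*$ exists, to derive the Euler--Lagrange equation in the form $\mu=\lambda\,\mu_{\tau,\Omega}$ with $\Omega=\interno(L_*)$ and $\lambda>0$, and finally to remove $\lambda$ by rescaling, using the homogeneity of $\tau$.

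The existence of a minimizer is where the two hypotheses on $\mu$ enter decisively; I expect this to be the main technical obstacle. Condition (\ref{1.1}) makes $L\mapsto\int h_L\,d\mu$ translation invariant (since $h_{L+v}=h_L+\langle\cdot,v\rangle$ and the linear term integrates to zero), so I may translate each term of a minimizing sequence $(L_k)$ to have, say, Steiner point at the origin. The assumption that $\mathrm{supp}(\mu)$ is not contained in any great sub--sphere then yields a bound $\int h_L\,d\mu\ge c_\mu\,\diam(L)$ on such centered bodies: otherwise, rescaling $L_k/\diam(L_k)$ and extracting a Blaschke-convergent subsequence would produce a non-degenerate convex body whose support function is non-positive on $\mathrm{supp}(\mu)$, contradicting the spanning assumption. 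Hence $(L_k)$ is bounded in Hausdorff distance, and a subsequence converges to a compact convex $L_*$ of positive diameter. The continuity of $L\mapsto\int h_L\,d\mu$ (uniform convergence of support functions) and the continuity of $\tau$ on convex bodies with positive inradius then make $L_*$ admissible and minimizing.

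Once $L_*$ is in hand, a small contraction shows that the constraint is active, i.e.\ $\tau(\Omega)=1$. Translate so that $0\in\Omega$, set $h=h_{L_*}$, and for $f\in C(\sfe)$ and small $|t|$ form $\Omega_t=\{x:\langle x,X\rangle<h(X)+tf(X)\ \forall X\in\sfe\}$ as in the introduction. By (\ref{1.0f}),
\[
\tau(\Omega_t)=1+t\int_{\sfe}f\,d\mu_{\tau,\Omega}+o(t),
\]
while the pointwise inequality $h_{\Omega_t}\le h+tf$ gives $\int h_{\Omega_t}\,d\mu\le m_\tau+t\int f\,d\mu$. Choosing $f$ with $\int f\,d\mu_{\tau,\Omega}>0$ makes $\Omega_t$ admissible for small $t>0$, and minimality forces $\int f\,d\mu\ge 0$. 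Applying this to $\pm f$ yields the implication $\int f\,d\mu_{\tau,\Omega}=0\Rightarrow\int f\,d\mu=0$, whence $\mu=\lambda\,\mu_{\tau,\Omega}$ as Borel measures on $\sfe$ for some $\lambda\ge 0$; since $\mu\ne 0$, necessarily $\lambda>0$.

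Finally, to absorb $\lambda$, observe that if $u$ solves (\ref{1.2}) on $\Omega$ then $u_c(x):=c^2u(x/c)$ solves it on $c\Omega$, so $|\nabla u_c(x)|^2=c^2|\nabla u(x/c)|^2$ on $\partial(c\Omega)$; combined with the factor $c^{n-1}$ gained by $\mh$ on $\partial(c\Omega)$ and the identity $\nu_{c\Omega}(cx)=\nu_\Omega(x)$, this produces the scaling law $\mu_{\tau,c\Omega}=c^{n+1}\,\mu_{\tau,\Omega}$. Setting $c=\lambda^{1/(n+1)}$ gives $\mu=\mu_{\tau,c\Omega}$, so that $c\Omega$ is the sought open bounded convex set.
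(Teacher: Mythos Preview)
Your approach is the paper's variational strategy, and the overall architecture is sound, but there is one genuine gap. You bound the diameter of a minimizing sequence $(L_k)$, extract a Hausdorff limit $L_*$ ``of positive diameter'', and then invoke ``the continuity of $\tau$ on convex bodies with positive inradius'' to conclude that $L_*$ is admissible. But positive diameter does not give positive inradius: nothing you have written prevents $L_*$ from being lower--dimensional, in which case $\interno(L_*)=\emptyset$ and $L_*$ is not admissible at all. The paper closes this gap in Proposition~\ref{prop5.1}: once $\diam(\Omega_k)\le C$, Lemma~\ref{lemma2.1} gives $|\nabla u_k|\le C$ in $\Omega_k$, so
\[
1\le\tau(\interno(L_k))=\int_{\Omega_k}|\nabla u_k|^2\,dx\le C^2\,\ml(L_k)\,,
\]
and the volumes---hence, via John's ellipsoid, the inradii---are uniformly bounded below. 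You need this step before you can pass to the limit in the constraint $\tau\ge 1$. (A related minor point: in your diameter argument, the contradiction at the end uses not only the spanning hypothesis but also (\ref{1.1}); from $h_{L^*}=0$ on $\mathrm{supp}(\mu)$ and $p\in L^*\setminus\{0\}$ you get $\langle p,X\rangle\le 0$ on $\mathrm{supp}(\mu)$, and only the centroid condition upgrades this to $\langle p,X\rangle=0$.)

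A second, smaller point concerns your Euler--Lagrange derivation, which differs from the paper's. The paper normalizes $B[h+tf]$ to have $\tau=1$ and differentiates $t\mapsto\int_{\sfe}h_{K_t}\,d\mu$ at $t=0$, using the formula $\left.\frac{d}{dt}\int_{\sfe}h_{B[h+tf]}\,d\mu\right|_{t=0}=\int_{\sfe}f\,d\mu$ (equation~(\ref{5.10}), from Lemmas~\ref{lemma4.1} and~\ref{lemma4.2}); this yields $\int f\,d\mu_{\tau,\Omega}=m\int f\,d\mu$ directly for every $f\in C(\sfe)$. Your route via the one--sided bound $h_{\Omega_t}\le h+tf$ is a legitimate and slightly more elementary alternative, but the sentence ``applying this to $\pm f$ yields $\int f\,d\mu_{\tau,\Omega}=0\Rightarrow\int f\,d\mu=0$'' is not literally correct: when $\int f\,d\mu_{\tau,\Omega}=0$, neither $f$ nor $-f$ satisfies your hypothesis $\int(\cdot)\,d\mu_{\tau,\Omega}>0$. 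You should perturb---take $g$ with $\int g\,d\mu_{\tau,\Omega}>0$ (e.g.\ $g=h$), apply your inequality to $f\pm\varepsilon g$, and let $\varepsilon\to 0^+$. With that fix, your argument goes through.
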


Let us now come to the issue of uniqueness. In many examples of Minkowski
problems, including the classical one, uniqueness is proved via a {\em
  Brunn--Minkowski type inequality} for the involved functional $\bf F$ and
the corresponding characterization of equality conditions. We say that $\bf F$
satisfies a Brunn-Minkowski inequality with standard 
equality conditions if, for every $\Omega_0$ and $\Omega_1$ open bounded and
convex subsets of $\R^n$ and for every $t\in[0,1]$ we have
\begin{equation}\label{1.BM}
{\bf F}^{1/\alpha}((1-t)\Omega_0+t\Omega_1)\ge
(1-t){\bf F}^{1/\alpha}(\Omega_0)+t{\bf F}^{1/\alpha}(\Omega_1)\,,
\end{equation}
and
\begin{equation}\label{1.BM2}
\mbox{equality holds if and only if $\Omega_1$ is a translate and dilate of $\Omega_0$.}  
\end{equation}
When $\bf F$ is the volume this is the classical Brunn-Minkowski Theorem, see \cite[Chapter
6]{Schneider}. For all the other examples of functionals mentioned before, including the
torsional rigidity, (\ref{1.BM}) and (\ref{1.BM2}) are valid; for the details
we refer the reader to \cite{Colesanti} and the literature quoted
therein. There is a standard argument based on (\ref{1.0b}), (\ref{1.BM}) and
(\ref{1.BM2}) to prove uniqueness in the Minkowski problem for $\bf F$; this
argument can be found in \cite[\S 7.2]{Schneider} or in \cite{Colesanti} in
the case of the volume, but it can be repeated identically in the case of
$\tau$. Hence we have the following completion of Theorem \ref{teo1.1}.

\begin{theorem}\label{teo1.2} In the assumptions of Theorem \ref{teo1.1}, the
  set $\Omega$ is uniquely determined up to a translation.
\end{theorem}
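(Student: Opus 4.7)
The plan is to follow the standard uniqueness argument used in the classical Minkowski problem (cf.\ \cite[\S 7.2]{Schneider}), adapting it to $\tau$: the inputs are the Brunn--Minkowski inequality (\ref{1.BM}) with its equality case (\ref{1.BM2}), the representation formula (\ref{1.0}), and Hadamard's formula (\ref{1.0b}), all of which are established for $\tau$ on open bounded convex sets in \S\ref{sec2}. Throughout we set $\alpha=n+2$, the degree of homogeneity of $\tau$.

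The first step is to derive a Minkowski-type inequality for $\tau$: for every open bounded convex $\Omega_0,\Omega_1\subset\R^n$,
\begin{equation}\label{plan:mink}
\frac{1}{n+2}\int_{\sfe}h_{\Omega_1}\,d\mu_{\tau,\Omega_0}\ge\tau(\Omega_0)^{(n+1)/(n+2)}\,\tau(\Omega_1)^{1/(n+2)},
\end{equation}
with equality if and only if $\Omega_0$ and $\Omega_1$ are homothetic. I would set $f(s):=\tau(\Omega_0+s\Omega_1)^{1/(n+2)}$ for $s\ge 0$; comparing support functions shows
$\Omega_0+((1-\lambda)s_1+\lambda s_2)\Omega_1=(1-\lambda)(\Omega_0+s_1\Omega_1)+\lambda(\Omega_0+s_2\Omega_1)$, which together with (\ref{1.BM}) proves that $f$ is concave on $[0,\infty)$. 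Homogeneity of $\tau$ gives $f(s)/s\to\tau(\Omega_1)^{1/(n+2)}$ as $s\to\infty$, and concavity yields $f'(0^+)\ge\lim_{s\to\infty}f(s)/s$. Computing $f'(0^+)$ by (\ref{1.0b}) produces (\ref{plan:mink}); in the equality case $f$ becomes affine on $[0,\infty)$, which through (\ref{1.BM2}) forces $\Omega_0$ and $\Omega_1$ to be homothetic.

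Once (\ref{plan:mink}) is in hand, the uniqueness argument is immediate. Assume $\Omega_0,\Omega_1$ both satisfy $\mu_{\tau,\Omega_i}=\mu$. By (\ref{1.0}), $\tau(\Omega_i)=(n+2)^{-1}\int h_{\Omega_i}\,d\mu$ for $i=0,1$. Applying (\ref{plan:mink}) to the pair $(\Omega_0,\Omega_1)$ and using $\mu_{\tau,\Omega_0}=\mu$,
$$\tau(\Omega_1)=\frac{1}{n+2}\int h_{\Omega_1}\,d\mu=\frac{1}{n+2}\int h_{\Omega_1}\,d\mu_{\tau,\Omega_0}\ge\tau(\Omega_0)^{(n+1)/(n+2)}\tau(\Omega_1)^{1/(n+2)},$$
whence $\tau(\Omega_1)\ge\tau(\Omega_0)$. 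Swapping the roles of $\Omega_0$ and $\Omega_1$ gives the reverse inequality, so $\tau(\Omega_0)=\tau(\Omega_1)$ and equality holds in (\ref{plan:mink}). Therefore $\Omega_1=\lambda\Omega_0+x_0$ for some $\lambda>0$ and $x_0\in\R^n$, and the identity $\tau(\Omega_1)=\lambda^{n+2}\tau(\Omega_0)$ then forces $\lambda=1$.

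The only genuinely technical points are the limit $f(s)/s\to\tau(\Omega_1)^{1/(n+2)}$ (which rests on continuity of $\tau$ under Hausdorff convergence of convex bodies) and the passage from ``$f$ is affine on $[0,\infty)$'' to ``$\Omega_0,\Omega_1$ are homothetic'' via (\ref{1.BM2}); both are standard and already available from \S\ref{sec2} and the references cited there, so I do not expect any real obstacle---the argument runs in perfect parallel to the volume case.
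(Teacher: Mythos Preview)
Your proposal is correct and follows precisely the route the paper indicates: the paper does not spell out a proof of Theorem~\ref{teo1.2} but refers to the ``standard argument based on (\ref{1.0b}), (\ref{1.BM}) and (\ref{1.BM2})'' found in \cite[\S 7.2]{Schneider} and \cite{Colesanti}, which is exactly the derivation of the Minkowski-type inequality via concavity of $s\mapsto\tau(\Omega_0+s\Omega_1)^{1/(n+2)}$ that you carry out. One minor correction: the representation and Hadamard formulas for $\tau$ are proved in \S\ref{sec3} (Corollary~\ref{cor3.0}), not \S\ref{sec2}.
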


\bigskip

The authors wish to thank professor D. Jerison for his suggestions concerning the proof
of the Hadamard formula for the torsional rigidity.

\section{Some preliminary results}\label{sec2} 

Let $\Omega$ be an open bounded convex subset of
$\R^n$ and consider the solution $u$ of the boundary--value problem
(\ref{1.2}). 
By standard results in the theory of elliptic partial differential equations
(see e.g. \cite{Gilbarg-Trudinger}) $u$ is uniquely determined and belongs to
$C^\infty(\Omega)\cap C(\overline\Omega)$. Note also that, by the strong
maximum principle, $u>0$ in $\Omega$. The function $u$ can be equivalently
defined through a variational problem, indeed it minimizes the functional
$$
\int_\Omega|\nabla v|^2\,dx-4\int_\Omega|v|\,dx\,,
$$
as $v$ ranges in $W^{1,2}_0(\Omega)$. In particular
$u\in W^{1,2}_0(\Omega)$. The torsional rigidity $\tau(\Omega)$ is defined by
$$
\tau(\Omega)=\int_\Omega|\nabla u|^2\,dx\,.
$$

\begin{remark}\label{remark2.1}
{\em If $u$ is the solution of (\ref{1.2}) in $\Omega$ and
    $s\ge0$, then the function 
$$
v(y)=s^2u\left(\frac{y}{s}\right)\,,\quad y\in s\Omega\,,
$$  
is the corresponding solution in $s\Omega=\{y=sx\,|\,x\in\Omega\}$. From this
fact and the definition of torsional rigidity it follows that $\tau$ is
positively homogeneous of order $n+2$:
$$
\tau(s\Omega)=s^{n+2}\tau(\Omega)\,,\quad\forall\,\Omega\,,\,\forall\,s\ge0\,.
$$ 
}
\end{remark}

The convexity of the domain strongly influences the geometry of the solution
$u$. The main result in this direction is the following theorem (see
\cite{Korevaar}, \cite{Kennington}). 

\begin{theorem}\label{teo2.1} 
Let $\Omega$ be an open bounded convex subset of $\R^n$ and let $u$ be the
solution of problem (\ref{1.2}) in $\Omega$. Then $\sqrt u$ is a concave
function in $\Omega$. 
\end{theorem}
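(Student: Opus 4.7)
The plan is to apply the concavity maximum principle developed in \cite{Korevaar,Kennington}, specialized to the function $v:=\sqrt{u}$.

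\emph{Step 1: equation for $v$.} Since $u=v^2$, differentiation gives $\nabla u=2v\nabla v$ and
$$
\Delta u=2v\Delta v+2|\nabla v|^2=-2,
$$
so $v$ satisfies the degenerate elliptic equation $v\Delta v+|\nabla v|^2=-1$ in $\Omega$, with $v\in C^\infty(\Omega)\cap C(\overline\Omega)$, $v>0$ in $\Omega$ and $v=0$ on $\partial\Omega$. We only need to show that $v$ is concave along every segment contained in $\Omega$, i.e.\ $C(x,y):=v(\tfrac{x+y}{2})-\tfrac{1}{2}v(x)-\tfrac{1}{2}v(y)\ge0$ on $\overline\Omega\times\overline\Omega$.

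\emph{Step 2: reduction to the smooth uniformly convex case.} By approximating $\Omega$ from inside by smooth uniformly convex domains $\Omega_k$ and using continuous dependence of solutions of (\ref{1.2}) on the domain (Hausdorff convergence implies local uniform convergence of the solutions), concavity of $\sqrt{u_{\Omega_k}}$ on $\Omega_k$ passes to the limit. Hence we may assume $\partial\Omega\in C^\infty$ with positive principal curvatures, in which case $u\in C^\infty(\overline\Omega)$ and $|\nabla u|$ is bounded away from zero on $\partial\Omega$.

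\emph{Step 3: the concavity maximum principle.} Assume for contradiction that $\inf_{\overline\Omega\times\overline\Omega} C<0$, attained at some $(x_0,y_0)$. The crucial \textbf{boundary analysis} rules out the possibility that $x_0$ or $y_0$ lies on $\partial\Omega$: expanding $u$ to second order near a smooth boundary point with $|\nabla u|>0$ gives $v(x)=|\nabla u(\pi x)|^{1/2}\,\mathrm{dist}(x,\partial\Omega)^{1/2}+O(\mathrm{dist}(x,\partial\Omega)^{3/2})$, and a barrier comparison shows $C(x,y)>0$ whenever $x$ or $y$ is close enough to $\partial\Omega$. Therefore $(x_0,y_0)$ is an interior minimum, at which stationarity gives
$$
\nabla v(x_0)=\nabla v(y_0)=\nabla v\!\left(\tfrac{x_0+y_0}{2}\right)=:p,
$$
and positive semidefiniteness of the $2n\times 2n$ Hessian of $C$ yields, as quadratic forms, $D^2v(\bar x)\ge\tfrac{1}{2}(D^2v(x_0)+D^2v(y_0))$, where $\bar x=(x_0+y_0)/2$. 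Plugging this into the equation $v\Delta v=-(1+|p|^2)$ at each of the three points (where $|\nabla v|^2=|p|^2$ is the same) produces, after exploiting the full matrix inequality and not merely its trace, a relation between $v(x_0)$, $v(y_0)$, $v(\bar x)$ incompatible with $C(x_0,y_0)<0$. The resulting contradiction forces $C\ge 0$ on $\overline\Omega\times\overline\Omega$.

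The main obstacle is Step 3: the naive trace version of the Hessian inequality produces only the consistent bound $1/v(\bar x)\le\tfrac{1}{2}(1/v(x_0)+1/v(y_0))$, which is compatible with the negativity of $C(x_0,y_0)$. One must exploit the full matrix-level inequality coming from the $2n$-dimensional Hessian, together with the equality of gradients, to close the argument; this is exactly the structural condition verified in \cite{Kennington} for the equation satisfied by $v$. A secondary difficulty is the boundary analysis, which is delicate because $|\nabla v|$ blows up like $\mathrm{dist}(\cdot,\partial\Omega)^{-1/2}$ on $\partial\Omega$, so that standard Hopf-type arguments for $v$ do not apply directly and must be replaced by the explicit local expansion above.
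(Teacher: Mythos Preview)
The paper does not prove Theorem \ref{teo2.1}; it simply quotes the result from \cite{Korevaar} and \cite{Kennington}. Your proposal is therefore not competing with a proof in the paper but is rather an attempt to expand those citations into a self-contained argument, and in its broad outline (reduction to smooth strictly convex domains, three-point concavity function, interior maximum principle) it follows exactly the Korevaar--Kennington scheme.

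There is, however, a genuine inaccuracy in your boundary analysis. The blanket assertion that ``$C(x,y)>0$ whenever $x$ or $y$ is close enough to $\partial\Omega$'' is false: fix $y$ near the maximum of $v$ and let $x\to\partial\Omega$; then $C(x,y)\to v(\tfrac{x+y}{2})-\tfrac12 v(y)$, which has no reason to be positive before concavity is established. What the Korevaar--Kennington argument actually shows is weaker but sufficient: a \emph{minimum} of $C$ cannot occur with $x_0\in\partial\Omega$ (and $y_0\in\Omega$), because sliding $x_0$ inward along the normal by $\varepsilon$ increases $v(x_0)$ by order $\varepsilon^{1/2}$ while $v(\tfrac{x_0+y_0}{2})$ changes only by $O(\varepsilon)$, so $C$ strictly decreases---contradicting minimality. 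Replacing your positivity claim by this perturbation argument fixes the gap. Your interior step is honestly stated: you correctly observe that the traced Hessian inequality only yields the harmless bound $1/v(\bar x)\le\tfrac12(1/v(x_0)+1/v(y_0))$ and that closing the argument requires the full $2n\times 2n$ second-order condition, which is precisely the structural hypothesis checked in \cite{Kennington} for this equation; deferring to that reference at this point is appropriate and is, in effect, what the paper itself does.
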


Let $M_\Omega=\max_{\overline\Omega}u$; for
every $t\in[0,M_\Omega]$ we define
$$
\Omega_t=\{x\in\Omega\,|\,u(x)> t\}\,.
$$
By Theorem \ref{teo2.1}, $\Omega_t$ is convex for every $t$. Moreover, $\nabla
u(x)=0$ if and only if $u(x)=M_\Omega$ so that
\begin{equation}\label{2.2}
\partial\Omega_t=\{x\in\Omega\,|\,u(x)=t\}\quad\forall\,t\in(0,M_\Omega)\,.  
\end{equation}
Theorem \ref{teo2.1} leads to an $L^\infty$ estimate for the gradient of $u$.

\begin{lemma}\label{lemma2.1} Let $\Omega$ be an open bounded convex subset of
  $\R^n$ and let $u$ be the solution of (\ref{1.2}) in $\Omega$. Then 
$$
|\nabla u(x)|\le\diam(\Omega)\quad\forall\, x\in\Omega\,.
$$  
\end{lemma}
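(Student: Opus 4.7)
My plan is to reduce the pointwise gradient bound at $x_0\in\Omega$ to a boundary estimate on a convex sublevel set of $u$, crucially exploiting Theorem~\ref{teo2.1}. If $\nabla u(x_0)=0$ there is nothing to prove, so I assume $\nabla u(x_0)\ne 0$ and set $t_0=u(x_0)\in(0,M_\Omega)$. Since $\sqrt u$ is concave by Theorem~\ref{teo2.1}, the set $\Omega_{t_0}=\{u>t_0\}=\{\sqrt u>\sqrt{t_0}\}$ is open, bounded and convex. Moreover $\nabla u$ vanishes only where $u$ attains its maximum, so $\nabla u\ne 0$ on $\{u=t_0\}$; therefore $\partial\Omega_{t_0}$ is a smooth hypersurface near $x_0$, with outward unit normal $\nu=-\nabla u(x_0)/|\nabla u(x_0)|$. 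Put $L_0=\diam(\Omega_{t_0})\le\diam(\Omega)=:L$.

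I then compare $u-t_0$, which solves $\Delta(u-t_0)=-2$ in $\Omega_{t_0}$ with zero boundary values, to the one-dimensional torsion function of a slab containing $\Omega_{t_0}$. By convexity of $\Omega_{t_0}$ and the diameter bound one has
\[
\Omega_{t_0}\subset S:=\{y\in\R^n\,:\,-L_0\le(y-x_0)\cdot\nu\le 0\}\,.
\]
Setting $s(y)=-(y-x_0)\cdot\nu$, the function $u_S(y)=s(L_0-s)$ satisfies $\Delta u_S=-2$ in $S$ and vanishes on $\partial S$, so in particular $u_S\ge 0$ on $\partial\Omega_{t_0}\subset\overline S$. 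Applying the maximum principle to the harmonic function $(u-t_0)-u_S$ in $\Omega_{t_0}$ then yields $u-t_0\le u_S$ throughout $\Omega_{t_0}$.

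Evaluating along the inward direction $\xi=-\nu=\nabla u(x_0)/|\nabla u(x_0)|$, one finds $s(x_0+r\xi)=r$ and hence $u_S(x_0+r\xi)=r(L_0-r)$. Dividing the inequality $u(x_0+r\xi)-t_0\le r(L_0-r)$ by $r>0$ and letting $r\to 0^+$ gives
\[
|\nabla u(x_0)|=\partial_\xi u(x_0)\le L_0\le L\,,
\]
which is the desired estimate.

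The only delicate point, and precisely where Theorem~\ref{teo2.1} is needed, is to arrange the geometric setup in one stroke: $\Omega_{t_0}$ must be simultaneously convex (so that it fits into the slab $S$ and the slab comparison goes through) and smooth near $x_0$ (so that $\nu$ and the inward normal derivative are meaningful and the Hopf-type evaluation of $u_S$ is legitimate). The concavity of $\sqrt u$ together with the already-noted fact that $\{\nabla u=0\}\subset\{u=M_\Omega\}$ deliver both properties at once; the rest of the proof is a straightforward slab comparison reducing the $L^\infty$ gradient bound to the sharp one-dimensional estimate $|(s(L_0-s))'|_{s=0}|=L_0$.
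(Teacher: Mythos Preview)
Your proof is correct and follows essentially the same approach as the paper's: both exploit the convexity of the level set $\Omega_{t_0}$ (from Theorem~\ref{teo2.1}) to sandwich it in a slab orthogonal to the normal at $x_0$, and then compare $u$ with the one--dimensional torsion function of that slab via the maximum principle. The only cosmetic differences are that you work in coordinate--free notation and use the slightly tighter slab width $L_0=\diam(\Omega_{t_0})$ before bounding it by $\diam(\Omega)$, whereas the paper fixes coordinates and uses $d=\diam(\Omega)$ directly.
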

\begin{proof} Let $\bar x\in\Omega$ and $t=u(\bar x)>0$. If $u(\bar x)=M_\Omega$ then $\nabla
  u(\bar x)=0$ and the claim is true. Assume $u(\bar x)<M_\Omega$; this implies that
  $\bar x\in\partial\Omega_t$. The convex set $\Omega_t$ admits a support
  hyperplane $\pi$ at $\bar x$. We may choose an orthogonal coordinate system with
  origin $O$ and coordinates $x_1,\dots,x_n$, in $\R^n$, such
  that $\bar x=O$, $\pi=\{x\in\R^n\,|\,x_n=0\}$ and
  $\Omega_t\subset\{x\in\R^n\,|\,x_n\ge0\}$. By a standard argument based on
  the implicit function theorem, $\partial\Omega_t$ is of class $C^\infty$ so
  that $\pi$ is in fact the tangent hyperplane to $\partial\Omega_t$ at $\bar
  x$. Consequently we have
$$
|\nabla u(\bar x)|=\frac{\partial u}{\partial x_n}(\bar x)\,.
$$
We also have the inclusion $\Omega_t\subset\{x\in\R^n\,|\,x_n\le d\}$,
where $d=\diam(\Omega)$. Let us introduce the function
$$
w(x)=w(x_1,\dots,x_n)=t+x_n(d-x_n)\,,\quad x\in\R^n\,.
$$
Note that $\Delta w(x)=-2$ for every $x$ in $\R^n$ and $w(x)\ge t$ for 
$x\in\{x=(x_1,\dots,x_n)\in\R^n\,|\,0\le x_n\le d\}$. In particular $w\ge u$
on $\partial\Omega_t$ and, by the Comparison Principle, 
$$
w(x)\ge u(x)\quad\forall\,x\in\Omega_t\,.
$$
Finally, as $u(\bar x)=w(\bar x)$,
$$
\frac{\partial u}{\partial x_n}(\bar x)\le
\frac{\partial w}{\partial x_n}(\bar x)=d\,.
$$   
\end{proof}

Next we investigate the boundary behavior of $\nabla u$. 
We will use the notion of {\em non--tangential limit} of a function at a
boundary point of a domain, which we briefly recall (for further details we
refer the reader to \cite{Kenig}). Let $\Omega$ be an open bounded subset of
$\R^n$ and let $\bar x\in\partial\Omega$. For $\alpha>0$ we define the {\em
  non--tangential cone}
$$
\Gamma_\alpha(\bar x)=\{x\in\Omega\,|\,|x-\bar x|\le(1+\alpha)\,{\rm
  dist}(x,\partial\Omega)\}\,.
$$
We say that a sequence of points $x_i\in\Omega$, $i\in\N$, converges
non--tangentially to $\bar x\in\partial\Omega$ if, for some $\alpha>0$, 
$$
\lim_{i\to\infty}x_i=\bar x\quad{\rm and}\quad x_i\in\Gamma_\alpha(\bar
x)\quad\forall\, i\in\N\,.
$$
Moreover, we say that a function $w$ defined in $\Omega$ admits {\em non--tangential
  limit} $L$ at $\bar x\in\partial\Omega$ if 
$$
\lim_{x\to\bar x,\,x\in\Gamma_\alpha(\bar x)}w(x)=L
$$
and such a limit does not depend on $\alpha$. In this case we write
$$
\lim_{x\to\bar x\,\nt}w(x)=L\,.
$$
By a {\em Lipschitz set} we mean a set which can be expressed locally, after a
suitable choice of the coordinate system, as the
epigraph of a Lipschitz function of $(n-1)$ variables. Note that an open bounded convex set is a
Lipschitz set. We quote an important result by Dahlberg (see \cite{Dahlberg}). 

\begin{theorem}\label{teo2.2} Let $\Omega$ be a Lipschitz open subset of
  $\R^n$  and let $w$ be harmonic and bounded by below in $\Omega$.
  Then for $\mh$--a.e. point $x\in\partial\Omega$, $w$ has a finite
  non--tangential limit at $x$.  
\end{theorem}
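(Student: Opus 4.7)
The plan is to sketch the standard strategy for Dahlberg's theorem; as the excerpt makes clear, the authors only quote \cite{Dahlberg}, so a full proof is not reproduced, but the proof decomposes naturally into a harmonic-measure Fatou theorem plus the absolute continuity of harmonic measure with respect to surface measure on Lipschitz boundaries.

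First, replacing $w$ by $w - \inf_\Omega w$, I reduce to the case $w \ge 0$. Fix a reference point $x_0 \in \Omega$ and let $\omega^{x_0}$ denote harmonic measure on $\partial \Omega$ with pole at $x_0$. The first ingredient I would use is a Fatou-type theorem: every nonnegative harmonic function on a bounded Lipschitz domain admits finite non-tangential limits at $\omega^{x_0}$-almost every boundary point. The proof goes through a Martin-boundary representation (for Lipschitz domains the Martin boundary can be identified with $\partial \Omega$, so $w$ is a Poisson-type integral against a finite Borel measure $\nu$ plus a bounded Green-potential part) and a non-tangential maximal function estimate; the key geometric fact exploited is that Lipschitz domains are non-tangentially accessible, so Harnack chains and standard Calder\'on--Zygmund covering arguments go through on Carleson-type regions at the boundary.

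The second, decisive ingredient is Dahlberg's own theorem: on a bounded Lipschitz domain, $\omega^{x_0}$ and $\mh|_{\partial \Omega}$ are mutually absolutely continuous, and in fact the Radon--Nikodym derivative $d\omega^{x_0}/d\mh$ satisfies a reverse H\"older inequality. Once this is known, it immediately upgrades the $\omega^{x_0}$-a.e. existence of non-tangential limits supplied by the first step to $\mh$-a.e. existence, which is the claim.

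The main obstacle is this second step, Dahlberg's absolute continuity theorem, which is the deep analytic content. Its proof rests on a Rellich identity applied to the Green function with pole at $x_0$, allowing one to compare the $L^2$ norms of the tangential and normal components of its gradient on the Lipschitz boundary; combined with non-tangential maximal function bounds and a stopping-time / good-$\lambda$ argument, one extracts a reverse H\"older inequality for the Poisson kernel, from which $\omega^{x_0} \ll \mh|_{\partial \Omega}$ follows. For the purposes of the present paper, no such reproduction is needed: it suffices that an open bounded convex set is Lipschitz, so Dahlberg's theorem applies and will yield, in the next step of \S\ref{sec2}, the $\mh$-a.e. existence of $\nabla u$ on $\partial \Omega$ required to define $\mu_{\tau,\Omega}$.
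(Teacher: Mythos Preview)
Your sketch is a faithful high-level outline of the standard route to Dahlberg's theorem, but there is nothing to compare it against: the paper does not prove Theorem~\ref{teo2.2} at all. It is stated as a quotation of Dahlberg's result \cite{Dahlberg} and used as a black box in the proof of Proposition~\ref{prop2.1}. Your own proposal already notes this, so you are aware of the situation; the additional sketch you provide (Fatou-type theorem with respect to harmonic measure, then Dahlberg's absolute continuity $\omega^{x_0}\sim\mh$ via Rellich identities and reverse H\"older) is accurate as an overview but goes well beyond what the paper supplies.
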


\begin{prop}\label{prop2.1} Let $\Omega$ be an open bounded convex set of $\R^n$ and let $u$
  be the solution of (\ref{1.2}) in $\Omega$. Then for
  $\mh$--a.e. $x\in\partial\Omega$, $\nabla u$ has finite non--tangential limit at
  $x$, i.e. each component of $\nabla u$ has finite
  non--tangential limit at $x$.   
\end{prop}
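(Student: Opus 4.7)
The plan is to reduce the statement to a direct application of Dahlberg's Theorem (Theorem \ref{teo2.2}) applied componentwise to $\nabla u$. The two things I have to check are: (a) each component of $\nabla u$ is harmonic in $\Omega$, and (b) each component is bounded below in $\Omega$. The Lipschitz hypothesis on the domain is free, since an open bounded convex set is a Lipschitz set, as already observed in the paragraph preceding Theorem \ref{teo2.2}.

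For (a), since $u\in C^\infty(\Omega)$ satisfies $\Delta u=-2$, for each $i\in\{1,\dots,n\}$ the partial derivative $w_i:=\partial u/\partial x_i$ belongs to $C^\infty(\Omega)$ and satisfies
$$
\Delta w_i=\frac{\partial}{\partial x_i}(\Delta u)=\frac{\partial}{\partial x_i}(-2)=0\quad\text{in }\Omega,
$$
so $w_i$ is harmonic in $\Omega$. For (b), Lemma \ref{lemma2.1} gives $|\nabla u(x)|\le\diam(\Omega)$ for every $x\in\Omega$, hence
$$
-\diam(\Omega)\le w_i(x)\le \diam(\Omega)\quad\forall\,x\in\Omega\,,\;\forall\,i=1,\dots,n\,,
$$
so in particular each $w_i$ is bounded below on $\Omega$.

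Thus Theorem \ref{teo2.2} applies to every $w_i$ and yields a Borel set $N_i\subset\partial\Omega$ with $\mh(N_i)=0$ such that $w_i$ has a finite non--tangential limit at every $x\in\partial\Omega\setminus N_i$. Setting $N=\bigcup_{i=1}^{n}N_i$ we still have $\mh(N)=0$, and at every $x\in\partial\Omega\setminus N$ each component of $\nabla u$ has a finite non--tangential limit at $x$, which is exactly the conclusion of the proposition.

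There is really no substantive obstacle here; the only mildly delicate point is the conceptual one of noticing that although $u$ itself is not harmonic (it satisfies a Poisson equation with constant right--hand side), its partial derivatives are, so Dahlberg's theorem can indeed be invoked componentwise. The $L^\infty$ bound of Lemma \ref{lemma2.1}, which rests on Theorem \ref{teo2.1} and the comparison principle, is what makes the ``bounded below'' hypothesis of Theorem \ref{teo2.2} immediate.
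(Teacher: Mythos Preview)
Your proof is correct and, in fact, more direct than the paper's. You observe that because $\Delta u$ is constant, each partial derivative $w_i=\partial u/\partial x_i$ is harmonic in $\Omega$, and then invoke the $L^\infty$ bound of Lemma \ref{lemma2.1} to get the lower bound required by Theorem \ref{teo2.2}. This is a clean componentwise application of Dahlberg's result on the domain $\Omega$ itself.

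The paper proceeds differently. It works on the annular subdomain $S=\Omega\setminus\cl(\Omega_{\bar t})$ and, for a point $y$ in a small ball around the maximum of $u$, considers $\tilde w(x)=\langle\nabla u(x),y-x\rangle$. Convexity of the superlevel sets (Theorem \ref{teo2.1}) makes these sets star--shaped with respect to $y$, which forces $\tilde w\ge0$ on $S$; since $\Delta\tilde w=-4$, one subtracts a quadratic to obtain a harmonic function bounded below and then applies Theorem \ref{teo2.2} on $S$. Varying $y$ then isolates each component of $\nabla u$. Both arguments ultimately rest on Theorem \ref{teo2.1}: the paper uses it to get star--shapedness (hence $\tilde w\ge0$), while you use it through Lemma \ref{lemma2.1}. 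Your route is shorter because the boundedness of $\nabla u$ has already been packaged in Lemma \ref{lemma2.1}, so there is no need to manufacture a non--negative combination and pass to a subdomain. The paper's argument, on the other hand, does not invoke Lemma \ref{lemma2.1} and would still go through with only the qualitative information that the superlevel sets are convex.
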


\begin{proof} Let us fix $\bar t\in(0,M_\Omega)$ and let $x^\prime\in\Omega$
  be such that $u(x^\prime)=M_\Omega$; note that $\Omega_{\bar t}$ is an open
  set, whose closure is contained in $\Omega$, and $x^\prime\in\Omega_{\bar
  t}$. Hence, there exists $\rho>0$ such that 
$$
\Omega_{\bar t}\supset B_\rho=\;\mbox{open ball of radius $\rho$
  centered at $x^\prime$.}
$$   
For every $t\in(0,\bar t)$, $\Omega_t\supset\Omega_{\bar t}\supset B_\rho$; since
$\Omega_t$ is convex, is also star--shaped with respect to every
point of $B_\rho$. We consider the set
$$
S=\Omega\setminus{\rm cl}(\Omega_{\bar t})=\{x\in\Omega\,|\,u(x)<\bar t\}
$$
(where ``${\rm cl}$'' denotes the closure of a set). Let $y$ be a point in
$B_\rho$; as the super--level set are star--shaped with respect to $y$ we have
that the function
$$
\tilde w(x)=\langle\nabla u(x),y-x\rangle
$$
is non--negative in $S$. Moreover $\Delta\tilde w=-4$ in $S$. As a consequence,
the function
$$
w(x)=\tilde w(x)-\frac{2}{n}|x|^2\,,\quad x\in S\,,
$$
is harmonic and bounded by below in $S$. As $\partial
S=\partial\Omega\cup\partial\Omega_{\bar t}$, applying Theorem \ref{teo2.2} we
deduce that $\mh$--a.e. on $\partial\Omega$, $w$ has finite non--tangential limit and
this implies that the same is true for the function $\tilde w$. Now, we observe that the
point $y$ in the definition of $\tilde w$ can be chosen arbitrarily in $B_\rho$;
if we apply the above argument for $y=x^\prime$ and
$y=x^\prime+\dfrac{\rho}{2}e_1$ respectively, where $\{e_1,\dots,e_n\}$ is the canonical
basis in $\R^n$, we obtain that the function
$$
\langle\nabla u(x),e_1\rangle=\frac{\partial u}{\partial x_1}
$$
admits finite non--tangential limit for $\mh$--a.e. point of
$\partial\Omega$. The same can be done for every component of $\nabla u$ and
this completes the proof.
\end{proof}

According to the above result, if $\Omega$ is an open bounded convex subset of
$\R^n$ and $u$ is the solution of problem (\ref{1.2}) in $\Omega$, then
$\nabla u$ is defined at $\mh$--a.e. point of $\partial\Omega$ and, by Lemma
\ref{lemma2.1}, $\nabla u\in L^\infty(\partial\Omega)$. 
As we already mentioned $\Omega$ is a  Lipschitz set 
and then $\partial\Omega$ is differentiable 
$\mh$--a.e.; consequently for $\mh$--a.e. $x\in\partial\Omega$ the outer
unit normal $\nu(x)\in\sfe$ is defined. The map $\nu$ is called the {\em Gauss
  map} and it is usually defined for the closure $\cl(\Omega)$ instead of
$\Omega$ itself. 
Note that $\nu^{-1}$ maps Borel subsets of $\sfe$ into
$\mh$--measurable subsets of $\partial\Omega$ (see \cite[Lemma 2.2.11]{Schneider}).  
We are now in position to define the main ingredient of this
paper, i.e. the measure $\mu_{\tau,\Omega}$, which corresponds to the notion of 
{\em area measure} when the volume is replaced by the torsional rigidity.

\begin{defi}\label{def2.1} Let $\Omega$ be an open bounded convex subset of
  $\R^n$ and let $u$ be the solution of problem (\ref{1.2}) in $\Omega$. Let 
  $\nu$ be the Gauss map of $\Omega$. For every Borel subset $\eta$ of $\sfe$
  we set
$$
\mu_{\tau,\Omega}(\eta)=\int_{\nu^{-1}(\eta)}|\nabla u(x)|^2\,d\mh(x)\,.
$$
Hence $\mu_{\tau,\Omega}$ is a non--negative Borel measure on $\sfe$. 
\end{defi}

\section{The Hadamard formula for $\tau$}\label{sec3}

Before we state the main result of this section, let us briefly recall the
notion of set addition (or {\em Minkowski addition}) and some other facts from
Convex Geometry. Let $A$ and 
$B$ two subsets of $\R^n$; their sum is defined as 
$$
A+B=\{x+y\,|\,x\in A\,,\,y\in B\}\,.
$$
Note that if $A$ and $B$ are open (resp. closed, bounded, convex), then $A+B$
is open (resp. closed, bounded, convex). Let $K\subset\R^n$ be a {\em convex
  body}, i.e. a compact convex set. The support function $h_K$ of $K$ is defined as
$$
h_K\,:\,\sfe\to\R\,,\quad h_K(X)=\sup_{x\in K}\langle X,x\rangle\,.
$$
Roughly speaking, $h_K(X)$ is the signed distance from the origin of the
supporting hyperplane to $K$ having $X$ as outer normal. In particular, if the
origin belongs to $K$ then $h_K$ is non--negative and $h_K(X)\le{\rm diam}(K)$
for every $X\in\sfe$. We refer to
\cite{Schneider} for further properties of the support function. 

\begin{theorem}\label{teo3.1} 
Let $\Omega$ be an open bounded convex subset of $\R^n$, $u$
  be the solution of (\ref{1.2}) in $\Omega$ and let $h$
  be the support function of ${\rm cl}(\Omega)$. Then 
\begin{equation}\label{3.1}
\tau(\Omega)=\frac{1}{n+2}
\int_{\partial\Omega}h(\nu(x))|\nabla u(x)|^2\,d\mh(x)\,,
\end{equation}
where $\nu$ is the Gauss map of $\cl(\Omega)$. 
Moreover, let $\Omega^\prime$ be another open bounded convex subset of $\R^n$
and let $h^\prime$ be the support function of ${\rm cl}(\Omega^\prime)$. Then 
\begin{equation}\label{3.2}
\lim_{s\to0^+}\frac{\tau(\Omega+s\Omega^\prime)-\tau(\Omega)}{s}=
\int_{\partial\Omega}h^\prime(\nu(x))|\nabla u(x)|^2\,d\mh(x)\,.
\end{equation}
\end{theorem}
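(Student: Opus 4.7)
The plan is to establish (\ref{3.1}) and (\ref{3.2}) first on the subclass of open bounded convex sets with boundary of class $C^\infty$ and strictly positive Gauss curvature, and then to transfer them to an arbitrary $\Omega$ by smooth approximation, using the results of Section \ref{sec2} to control the boundary contributions in the limit.

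In the smooth case, (\ref{3.1}) is a Rellich--Pohozaev identity. Multiplying $\Delta u=-2$ by the radial field $x\cdot\nabla u$ and applying the divergence theorem twice, one uses in an essential way that $u=0$ on $\partial\Omega$ (so that $\nabla u=-|\nabla u|\nu$ there) together with the identity $\tau(\Omega)=2\int_\Omega u\,dx$ (obtained by multiplying $\Delta u=-2$ by $u$ and integrating by parts). The interior terms reduce to multiples of $\tau(\Omega)$, the boundary contributions collapse, and one is left with
\begin{equation*}
(n+2)\,\tau(\Omega)=\int_{\partial\Omega}(x\cdot\nu(x))\,|\nabla u(x)|^2\,d\mh(x).
\end{equation*}
Taking the origin inside $\Omega$ gives $x\cdot\nu(x)=h(\nu(x))$ on $\partial\Omega$, which is (\ref{3.1}); the general case then follows from translation invariance of $\tau$ combined with the vector identity (\ref{1.0c}) for $\mu_{\tau,\Omega}$. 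For (\ref{3.2}) in the smooth case, I would use a shape--derivative argument: along $\Omega_s=\Omega+s\Omega'$ the outward normal velocity at $x\in\partial\Omega$ is exactly $h'(\nu(x))$. Differentiating $\tau(\Omega_s)=2\int_{\Omega_s}u_s\,dx$ via the transport theorem produces a boundary term that vanishes (since $u=0$ on $\partial\Omega$) plus $2\int_\Omega u'\,dx$, where the Eulerian shape derivative $u'$ is harmonic in $\Omega$ with boundary value $V|\nabla u|$. A short Green's identity computation turns $2\int_\Omega u'\,dx$ into $\int_{\partial\Omega}V|\nabla u|^2\,d\mh=\int_{\partial\Omega}h'(\nu)|\nabla u|^2\,d\mh$, which is (\ref{3.2}).

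To extend both formulas to a general open bounded convex $\Omega$, I would approximate by smooth strictly convex sets $\Omega^{(k)}\nearrow\Omega$ in Hausdorff distance and pass to the limit. Continuity of $\tau$ under Hausdorff convergence and uniform convergence of support functions dispose of $\tau(\Omega^{(k)})\to\tau(\Omega)$ and of the $h(\nu)$ factor in the integrand. The delicate point is convergence of the boundary gradient measures. The essential inputs from Section \ref{sec2} are Lemma \ref{lemma2.1}, which gives the uniform bound $|\nabla u^{(k)}|\le\diam(\Omega^{(k)})$ on $\partial\Omega^{(k)}$, and Proposition \ref{prop2.1} together with Dahlberg's Theorem \ref{teo2.2}, which yield non--tangential convergence $\nabla u^{(k)}\to\nabla u$ at $\mh$--a.e.\ point of $\partial\Omega$. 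Pushed forward to $\sfe$ via the Gauss maps, these ingredients combine through dominated convergence to give weak convergence $\mu_{\tau,\Omega^{(k)}}\rightharpoonup\mu_{\tau,\Omega}$, from which (\ref{3.1}) passes to the limit. For (\ref{3.2}) one needs in addition to interchange the $k\to\infty$ limit with the differential quotient $s\to0^+$; a uniform Lipschitz estimate of $s\mapsto\tau(\Omega^{(k)}+s\Omega')$ near $s=0$, following from (\ref{3.2}) in the smooth case and the uniform gradient bound, should justify this swap.

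The principal technical obstacle is the weak convergence of the measures $\mu_{\tau,\Omega^{(k)}}$: the $L^\infty$ bound on $\nabla u^{(k)}$ alone does not suffice, because both the domain of integration and the Gauss map vary with $k$. One must work simultaneously with the geometry of the approximating boundaries and the boundary regularity of $u^{(k)}$, for instance by pulling the boundary integrals back to $\sfe$ via the inverse Gauss maps of the smooth $\Omega^{(k)}$ and controlling the reciprocal Gauss curvatures, so as to reduce matters to a dominated convergence argument on a common reference space.
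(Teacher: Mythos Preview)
Your overall strategy---establish the smooth case by Pohozaev/shape--derivative identities and then pass to the limit along smooth approximations---matches the paper's. The smooth case is fine and is exactly what the paper imports from \cite{Colesanti} and \cite{Fimiani}. The difficulty, as you recognize, is the limit step, and here your argument has a genuine gap.

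You write that Proposition \ref{prop2.1} together with Dahlberg's theorem ``yield non--tangential convergence $\nabla u^{(k)}\to\nabla u$ at $\mh$--a.e.\ point of $\partial\Omega$''. They do not. Proposition \ref{prop2.1} says that, for a \emph{fixed} domain $\Omega$, the gradient of the \emph{fixed} solution $u$ has non--tangential boundary values. It says nothing about the convergence of $\nabla u^{(k)}$, the solutions on the \emph{varying} domains $\Omega^{(k)}$, towards $\nabla u$. If $x_k\in\partial\Omega^{(k)}$ tends non--tangentially to $x\in\partial\Omega$, Proposition \ref{prop2.1} gives $\nabla u(x_k)\to\nabla u(x)$, but the quantity you need is $\nabla u^{(k)}(x_k)$, and closing the gap between $\nabla u^{(k)}(x_k)$ and $\nabla u(x_k)$ is precisely the hard step. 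The $L^\infty$ bound from Lemma \ref{lemma2.1} is not enough.

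The paper handles this by splitting the boundary integral into a term involving $|\nabla u|^2$ on $\partial\Omega_i$ (for which Proposition \ref{prop2.1} and Lemma \ref{lemma3.2} suffice) and a remainder controlled by $\int_{\partial\Omega_i}|\nabla(u_i-u)|^2\,d\mh$. The latter is estimated via a Rellich--type inequality for harmonic functions on Lipschitz domains (Corollary 2.1.14 of \cite{Kenig}), which bounds the full gradient of $u_i-u$ on $\partial\Omega_i$ by the \emph{tangential} gradient of $u$ on $\partial\Omega_i$; this tangential gradient then tends to zero by Lemma \ref{lemma3.3}. This Rellich estimate is the missing ingredient in your plan, and without it (or a substitute of comparable strength) the weak convergence $\mu_{\tau,\Omega^{(k)}}\rightharpoonup\mu_{\tau,\Omega}$ is unproved. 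Your suggestion to pull back via inverse Gauss maps and reciprocal Gauss curvatures does not address this issue (and would run into trouble at the non--smooth limit anyway); the paper pulls back via radial maps, which are well behaved for arbitrary convex bodies, but the key analytic input is still the Rellich estimate.

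For (\ref{3.2}), once the continuity of $\tau_1$ is in hand, your swap--of--limits idea is essentially what the paper does: it writes $\tau(\Omega_{i,t})-\tau(\Omega_i)=\int_0^t\tau_1(\Omega_{i,s},\Omega'_i)\,ds$ in the smooth case and passes to the limit under the integral by dominated convergence.
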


By the above result and Definition \ref{def2.1} we immediately get the validity of formulas
(\ref{1.0}) and (\ref{1.0b}) for $\tau$. 

\begin{cor}\label{cor3.0} In the assumptions and notations of Theorem \ref{teo3.1} we have
\begin{equation}\label{3.1bis}
\tau(\Omega)=\frac{1}{n+2}
\int_{\sfe}h(X)\,d\mu_{\tau,\Omega}(X)\,,
\end{equation}  
and
\begin{equation}\label{3.2bis}
\lim_{s\to0^+}\frac{\tau(\Omega+s\Omega^\prime)-\tau(\Omega)}{s}=
\int_{\sfe}h^\prime(X)\,d\mu_{\tau,\Omega}(X)\,.
\end{equation}  
\end{cor}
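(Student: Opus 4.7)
The natural strategy is to prove both formulas first for smooth strictly convex domains via integration by parts, and then to pass to the general convex case by Hausdorff approximation.

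For (\ref{3.1}) in the smooth case, the key identity is the Pohozaev--type relation
\[
{\rm div}\bigl(x\,|\nabla u|^{2} - 2(x\cdot\nabla u)\,\nabla u\bigr) = (n-2)\,|\nabla u|^{2} - 2(x\cdot\nabla u)\,\Delta u,
\]
which follows from a direct computation exploiting the symmetry $\sum_{i,j} x_i\,u_{ij}\,u_j = \tfrac{1}{2}\langle x,\nabla|\nabla u|^2\rangle$. Substituting $\Delta u = -2$ and integrating over $\Omega$, the Divergence Theorem turns the left--hand side into a boundary integral. On $\partial\Omega$, since $u\equiv 0$, one has $\nabla u = -|\nabla u|\,\nu$ and $h(\nu(x))=\langle x,\nu(x)\rangle$, so the two boundary contributions combine into $-\int_{\partial\Omega} h(\nu)\,|\nabla u|^{2}\,d\mh$. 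In the bulk, a further integration by parts together with the standard identity $\int_\Omega u\,dx = \tau(\Omega)/2$ gives $\int_\Omega x\cdot\nabla u\,dx = -\tfrac{n}{2}\tau(\Omega)$. Collecting these terms yields $(n+2)\tau(\Omega)=\int_{\partial\Omega} h(\nu)\,|\nabla u|^{2}\,d\mh$, which is (\ref{3.1}).

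For (\ref{3.2}) in the smooth case, I would exploit the explicit parametrization of Minkowski sums of strictly convex smooth bodies. The map $\Phi_s(x) = x + s\,\nabla h'(\nu(x))$ sends $\partial\Omega$ bijectively onto $\partial(\Omega+s\Omega')$, preserving outer normal direction, and its pointwise normal velocity at $s=0$ equals $h'(\nu)$. Extending $\Phi_s$ to a diffeomorphism $\overline{\Omega}\to\cl(\Omega+s\Omega')$ and pulling $u_s$ back via $v_s = u_s\circ\Phi_s$, one differentiates $\tau(\Omega+s\Omega')=\int_{\Omega+s\Omega'}|\nabla u_s|^{2}\,dx$ at $s=0$; after integrating by parts and using $\Delta u = -2$ and $u = 0$ on $\partial\Omega$, the interior contributions cancel and what remains is the boundary term $\int_{\partial\Omega}|\nabla u|^{2}\,h'(\nu)\,d\mh$, which is (\ref{3.2}).

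The extension of both formulas to arbitrary open bounded convex $\Omega$ (and $\Omega'$) would be carried out by approximating with smooth strictly convex $\Omega_j \to \Omega$ in the Hausdorff metric (a standard Schneider construction). The left--hand sides converge because $\tau$ is continuous under Hausdorff convergence (from its variational definition); the support functions $h_j$ converge uniformly to $h$ on $\sfe$; and Lemma \ref{lemma2.1} gives the uniform bound $|\nabla u_j|\le\diam(\Omega_j)$. The main obstacle is to pass to the limit in the boundary integrals $\int_{\partial\Omega_j} h_j(\nu_j)\,|\nabla u_j|^{2}\,d\mh$: one must establish, in a suitable sense, the convergence of the surface measures $|\nabla u_j|^{2}\,d\mh$ on $\partial\Omega_j$ to $|\nabla u|^{2}\,d\mh$ on $\partial\Omega$. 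My plan is to pull everything back to $\sfe$ via the inverse Gauss maps and invoke Proposition \ref{prop2.1} together with Dahlberg--type non--tangential estimates to secure $\mh$--a.e.\ pointwise convergence of the integrands on $\partial\Omega$; the uniform $L^\infty$ bound then licenses dominated convergence. For (\ref{3.2}) the same apparatus would be combined with the concavity of $s\mapsto\tau(\Omega+s\Omega')^{1/(n+2)}$ coming from the Brunn--Minkowski inequality for $\tau$, in order to transfer the one--sided differentiation through the approximation.
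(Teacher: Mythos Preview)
You have aimed at the wrong target. Corollary~\ref{cor3.0} is not an independent result: the paper states it immediately after Theorem~\ref{teo3.1} with the remark that it follows ``immediately'' from that theorem together with Definition~\ref{def2.1}. Indeed, once (\ref{3.1}) and (\ref{3.2}) are known, passing to (\ref{3.1bis}) and (\ref{3.2bis}) is nothing more than the change of variables $x\mapsto\nu(x)$ and the definition of $\mu_{\tau,\Omega}$ as the pushforward of $|\nabla u|^{2}\,d\mh$ under the Gauss map. Your proposal is instead a strategy for proving Theorem~\ref{teo3.1} itself.

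As a sketch of Theorem~\ref{teo3.1}, your outline is broadly in the same spirit as the paper's (smooth case via Pohozaev/divergence identities, then Hausdorff approximation), but two technical choices diverge and one of them is problematic. First, you propose to pull boundary integrals back to $\sfe$ via the \emph{inverse Gauss map}; for a general convex body the Gauss map is neither injective nor surjective, so this is not available. The paper uses the radial map instead (formula~(\ref{3.3})), which is always a bijection from $\sfe$ to $\partial\Omega$. Second, your plan to obtain convergence of $\int_{\partial\Omega_j}h_j(\nu_j)|\nabla u_j|^{2}\,d\mh$ omits the decisive step: you need to control $|\nabla u_j|^{2}-|\nabla u|^{2}$ on $\partial\Omega_j$, and nontangential convergence of $\nabla u$ alone does not do this, since $u_j$ changes with $j$. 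The paper handles this via a Rellich-type estimate (Corollary~2.1.14 in \cite{Kenig}) bounding $\int_{\partial\Omega_j}|\nabla(u_j-u)|^{2}\,d\mh$ by the tangential part $\int_{\partial\Omega_j}|\nabla_{T_j}u|^{2}\,d\mh$, together with Lemma~\ref{lemma3.3} showing the latter tends to zero. For (\ref{3.2}) the paper does not invoke Brunn--Minkowski concavity; it integrates the smooth derivative formula over $[0,t]$ and passes to the limit under the integral sign, which avoids the issue of interchanging limit and differentiation that your concavity argument would still have to justify.
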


Some comments are in order. Under the assumption that the boundaries of
$\Omega$ and $\Omega^\prime$ are 
of class $C^2$, the theorem was stated in \cite[Proposition 18]{Colesanti},
with a proof of (\ref{3.1}) and a sketch of the proof of (\ref{3.2}). A
detailed proof of the latter equality can be found in \cite{Fimiani}.
In order to remove the regularity assumption
we will combine several ingredients: ({\em i}) the validity of the
equality for sets with smooth boundary; ({\em ii}) the density, with respect
to the Hausdorff metric, of convex bodies with smooth boundary in the class of
all convex bodies; ({\em iii}) the continuity, in both arguments, of the set functional 
$$
(\Omega,\Omega')\longrightarrow  
\int_{\partial\Omega}h^\prime(\nu_\Omega(x))|\nabla u(x)|^2\,d\mh(x)
$$
(with respect to the Hausdorff metric). The main effort will be required for
part ({\em  iii}). The proof of Theorem \ref{teo3.1} is preceded by some preparatory
lemmas.  In the sequel we use the notion of Hausdorff distance and Hausdorff
metric, for which we refer to \cite{Schneider}. 

\begin{lemma}\label{lemma3.1} Let $K\subset\R^n$ be a convex body with
  non--empty interior and let $K_i$, $i\in\N$, be a sequence of convex bodies
  converging to $K$ in the Hausdorff metric. Then there exists a sequence
  $\alpha_i$, $i\in\N$, such that
$$
\lim_{i\to\infty}\alpha_i=1\,,\quad
\alpha_i K_i\subset{\rm int}(K)\,,\;\forall\,i\in\N\,,
$$
and $\alpha_i K_i$ converges to $K$ in the Hausdorff metric. 
\end{lemma}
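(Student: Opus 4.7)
The plan is to translate the problem to support functions, where both Hausdorff convergence and interior containment admit clean characterizations. By translating if necessary, I may assume $0\in\interno(K)$, so there exists $r>0$ with $B_r(0)\subset K$; equivalently $h_K(X)\ge r$ for every $X\in\sfe$. Recall that the Hausdorff distance between two convex bodies equals the sup-norm distance between their support functions, so setting $\delta_i:=d_H(K_i,K)$ one has $\delta_i\to 0$ and $|h_{K_i}(X)-h_K(X)|\le\delta_i$ uniformly on $\sfe$. Recall also the characterization: a convex body $L$ satisfies $L\subset\interno(K)$ iff $h_L(X)<h_K(X)$ for every $X\in\sfe$ (by compactness this strict pointwise inequality is equivalent to $h_L+\varepsilon\le h_K$ for some $\varepsilon>0$, and then $L+B_\varepsilon(0)\subset K$).

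Given this, I define
\[
\alpha_i:=1-\frac{2\delta_i}{r}
\]
for all $i$ large enough that $\alpha_i\in(0,1)$ (the finitely many remaining $\alpha_i$ may be set to $0$); clearly $\alpha_i\to 1$. For the inclusion $\alpha_iK_i\subset\interno(K)$, use $h_{\alpha_iK_i}=\alpha_ih_{K_i}$ and estimate
\[
\alpha_ih_{K_i}(X)\le\alpha_i\bigl(h_K(X)+\delta_i\bigr)=h_K(X)-(1-\alpha_i)h_K(X)+\alpha_i\delta_i\le h_K(X)-2\delta_i+\delta_i<h_K(X),
\]
where I used $(1-\alpha_i)h_K(X)\ge(1-\alpha_i)r=2\delta_i$ and $\alpha_i<1$. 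By the characterization above, this gives $\alpha_iK_i\subset\interno(K)$.

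For the convergence $\alpha_iK_i\to K$, I apply the triangle inequality for the Hausdorff metric:
\[
d_H(\alpha_iK_i,K)\le d_H(\alpha_iK_i,K_i)+d_H(K_i,K).
\]
The second term tends to zero by hypothesis, while
\[
d_H(\alpha_iK_i,K_i)\le(1-\alpha_i)\sup_{x\in K_i}|x|,
\]
and since the sequence $K_i$ is bounded (being Hausdorff-convergent to the bounded set $K$), the supremum is uniformly bounded in $i$, so this term also tends to zero. Combining gives $\alpha_iK_i\to K$.

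There is no real obstacle here: the argument is essentially a quantitative comparison of support functions, and the only bit that deserves care is the translation reduction to $0\in\interno(K)$ (which is legitimate for the intended application, where the lemma will be used to compare $K_i$ to a fixed reference body $K=\cl(\Omega)$) and the exact choice of $\alpha_i$, which must compensate for the $L^\infty$ error $\delta_i$ while still using the strict positivity of $h_K$ on the whole sphere.
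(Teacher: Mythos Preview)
Your proof is correct and follows essentially the same approach as the paper's: translate so that a ball about the origin lies in $K$, pass to support functions (Hausdorff convergence $\Leftrightarrow$ uniform convergence of $h_{K_i}$), and choose $\alpha_i$ close to $1$ so that $\alpha_i h_{K_i}<h_K$ pointwise on $\sfe$; the paper uses $\alpha_i=\min_{X\in\sfe} h_K(X)/h_{K_i}(X)-1/i$ instead of your $1-2\delta_i/r$. One small caveat: your formula gives $\alpha_i=1$ whenever $\delta_i=0$, which does \emph{not} yield $\alpha_iK_i\subset\interno(K)$ and can occur for infinitely many $i$; the paper's ``$-1/i$'' correction handles this, and replacing your $\delta_i$ by $\delta_i+1/i$ (or any positive sequence tending to $0$) fixes your version just as easily.
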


\begin{proof} We recall that, in the set of convex bodies, the convergence
  with respect to the Hausdorff metric is equivalent to uniform convergence of
  support functions on $\sfe$ (see \cite{Schneider}). It is not restrictive to
  assume that for some $\rho>0$
$$
B_\rho\subset K\,,\,K_i\quad\forall i\in\N\,,
$$
where $B_\rho$ is the ball centered at the origin with radius $\rho$. This
implies $h(X)\ge\rho$ and $h_i(X)\ge\rho$ for every $X\in\sfe$ and for every
$i\in\N$. Similarly, as ${\rm diam}(K_i)\to{\rm diam}(K)$, the functions $h$
and $h_i$, $i\in\N$, are uniformly bounded from above. We set
$$
\alpha_i=\min_{X\in\sfe}\frac{h(X)}{h_i(X)}-\frac{1}{i}\,,\quad i\in\N\,.
$$
By the uniform convergence, $\alpha_i>0$ (at least definitively) and 
$\alpha_i\to 1$ as $i$ tends to infinity. We also have that
$$
\alpha_i\,h_i(X)<h(X)\quad\forall\,X\in\sfe\,,\;\forall\,i\in\N\,,
$$
which implies $\alpha_iK_i\subset{\rm int}(K)$. Finally $\alpha_i\,h_i$
converges uniformly to $h$ in $\sfe$, i.e. $\alpha_iK_i$ converges to $K$ in
the Hausdorff metric. 
\end{proof}

Let $K\subset\R^n$ be a convex body such that the origin is an interior
point of $K$. For $\theta\in\sfe$ we set
$$
\rho_K(\theta)=\sup\{\rho\ge0\,|\,\rho\theta\in K\}\,.
$$
This is the {\em radial function} of $K$. The corresponding {\em radial map} is
defined by
$$
r_K\,:\,\sfe\to\partial K\,,\quad r_K(\theta)=\rho_K(\theta)\,\theta\,.
$$
In other words, $r_K(\theta)$ is the (unique) intersection of $\partial\Omega$ with the
ray from the origin parallel to $\theta$. 
For the reader's convenience, throughout
this paper the variable of radial functions will be denoted by $\theta$ while
the variable of support functions will be denoted by $X$, though they are both
defined on $\sfe$. 

Let $\Omega={\rm int}(K)$; if
$f\,:\,\partial\Omega\to\R$ is $\mh$--integrable we have the following
formula for the change of variable given by the radial map:
\begin{equation}
  \label{3.3}
\int_{\partial\Omega}f(x)\,d\mh(x)=
\int_{\sfe}f(r_K(\theta))\frac{\rho_K^n(\theta)}{h_K(\nu_K(\rho_K(\theta)))}\,d\mh(\theta)\,.
\end{equation}
Let $\Omega_i$, $i\in\N$ be a sequence of open bounded convex sets
such that $K_i={\rm cl}(\Omega_i)$ converges to $K$ in the Hausdorff metric as
$i\to\infty$; then $\rho_{K_i}$ converges uniformly to $\rho_K$ on $\sfe$ and
\begin{equation}\label{3.4}
R_i(\theta):=\frac{\rho_{K_i}^n(\theta)}{h_{K_i}(\nu_{K_i}(\rho_{K_i}(\theta)))}
\longrightarrow
\frac{\rho_K^n(\theta)}{h_K(\nu_K(\rho_K(\theta)))}\quad\mbox{for $\mh$--a.e. $\theta\in\sfe$.} 
\end{equation}
Moreover the functions $R_i$ are uniformly bounded above and below by positive
constants depending on the {\em inner radius} of $\Omega$ (i.e. the radius
of the largest ball contained in $\Omega$) and the diameter of $\Omega$. 

\begin{lemma}\label{lemma3.2} Let $\Omega$, $\Omega_i$, $i\in\N$, be open
  bounded convex subsets of $\R^n$ and assume that the sequence of convex
  bodies $K_i={\rm cl}(\Omega_i)$, $i\in\N$, converges to $K={\rm cl}(\Omega)$
  in the Hausdorff metric. Let 
$$
f\,:\,\partial\Omega\rightarrow\R\,,\quad
f_i\,:\,\partial\Omega_i\rightarrow\R\,,\quad i\in\N\,,
$$
be $\mh$--measurable functions such that:
\begin{itemize}
\item[i.] there exists $C>0$ for which
$$
\|f\|_{L^\infty(\partial\Omega)}\le C\,,\quad
\|f_i\|_{L^\infty(\partial\Omega_i)}\le C\,,\quad\forall\,i\in\N\,;
$$
\item[ii.] for $\mh$--a.e. $x\in\partial\Omega$, if $x_i\in\partial\Omega_i$,
  $i\in\N$, is such that $x_i\to x$ non--tangentially and $f_i$ is defined in $x_i$, then
$$
\lim_{i\to\infty}f(x_i)=f(x)\,.
$$
\end{itemize}
Under these conditions we have
\begin{equation}\label{3.5}
\lim_{i\to\infty}\int_{\partial\Omega_i}f_i(x)\,d\mh(x)=
\int_{\partial\Omega}f(x)\,d\mh(x)\,.
\end{equation}
\end{lemma}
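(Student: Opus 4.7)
The plan is to pull every boundary integral back to the fixed sphere $\sfe$ via the radial map, and then apply the dominated convergence theorem. After a harmless translation we may assume that the origin lies in $\Omega$; since $K_i\to K$ in the Hausdorff metric, for all sufficiently large $i$ the origin also lies in $\Omega_i$, and we may assume this holds for every $i$. Thus the radial functions $\rho_{K_i}$, $\rho_K$ and the radial maps $r_{K_i}$, $r_K$ are well defined, and the change of variable formula (\ref{3.3}) gives
\begin{equation*}
\int_{\partial\Omega_i}f_i\,d\mh
=\int_{\sfe}f_i(r_{K_i}(\theta))\,R_i(\theta)\,d\mh(\theta),\qquad
\int_{\partial\Omega}f\,d\mh
=\int_{\sfe}f(r_K(\theta))\,R(\theta)\,d\mh(\theta),
\end{equation*}
where $R_i$ is the Jacobian appearing in (\ref{3.4}) and $R$ is its limit. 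By the comments following (\ref{3.4}) the functions $R_i$ are uniformly bounded above by a constant depending only on the inner radius and the diameter of $\Omega$ (both of which behave continuously under Hausdorff convergence), and $R_i\to R$ at $\mh$-a.e. $\theta\in\sfe$. Combined with hypothesis (i), this yields a common $L^\infty(\sfe)$ bound on the integrands, so the dominated convergence theorem reduces (\ref{3.5}) to the pointwise statement
\begin{equation*}
f_i(r_{K_i}(\theta))\,R_i(\theta)\longrightarrow f(r_K(\theta))\,R(\theta)
\qquad\text{for $\mh$-a.e.\ }\theta\in\sfe.
\end{equation*}

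What remains is to check that $f_i(r_{K_i}(\theta))\to f(r_K(\theta))$ almost everywhere, and here the work is to feed hypothesis (ii). Set $x:=r_K(\theta)\in\partial\Omega$ and $x_i:=r_{K_i}(\theta)=\rho_{K_i}(\theta)\theta$; uniform convergence of radial functions already gives $x_i\to x$, and since the radial parametrization of each $\partial\Omega_i$ is bi-Lipschitz, the $\mh$-null set of $\theta\in\sfe$ for which $f_i$ fails to be defined at $r_{K_i}(\theta)$ for some $i$ is a countable union of null sets, hence negligible. The key point is that this convergence is non-tangential with respect to $\Omega$ for $\mh$-a.e.\ $\theta$. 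Indeed, since $\partial\Omega$ is Lipschitz the outer unit normal $\nu(x)$ exists at $\mh$-a.e.\ $x\in\partial\Omega$, and because the origin lies in $\interno(\Omega)$ one has $c_\theta:=\langle\nu(x),\theta\rangle>0$ at every such point. Using the supporting hyperplane to $\Omega$ at $x$ with outer normal $\nu(x)$, together with the convexity of $\Omega_i$ and the convergence $\nu(r_{K_i}(\theta))\to\nu(x)$ that holds after passing to a subsequence for a.e.\ $\theta$, one obtains an inequality of the form ${\rm dist}(x_i,\partial\Omega)\ge (c_\theta/2)|x_i-x|$ for all large $i$. This is precisely the cone condition with $\alpha:=2/c_\theta-1$, so hypothesis (ii) applies and yields $f_i(x_i)\to f(x)$ for $\mh$-a.e.\ $\theta$, completing the proof.

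The main obstacle is the non-tangential verification in the last paragraph: although $\langle\nu(x),\theta\rangle>0$ almost everywhere, the angle between the radial direction and the normal can approach $\pi/2$ on a set of positive measure, so the cone aperture $\alpha$ necessarily depends on $\theta$. This is compatible with hypothesis (ii), which permits $\alpha$ to depend on the base point, but it requires a careful pointwise (rather than uniform) comparison between the sequence $x_i\in\partial\Omega_i$ and the boundary of the limit set $\Omega$, leveraging convexity to convert an angle bound into a quantitative distance estimate.
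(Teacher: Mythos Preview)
Your overall strategy---pulling the integrals back to $\sfe$ via the radial map, invoking the bounds around (\ref{3.4}) together with hypothesis \emph{i.}\ to set up dominated convergence, and reducing everything to $f_i(r_{K_i}(\theta))\to f(r_K(\theta))$ for $\mh$--a.e.\ $\theta$---is exactly the paper's.

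Where you diverge is in the non--tangential verification, and there your argument is more complicated than needed and not quite right. The paper settles it in one line: the points $r_{K_i}(\theta)$ and $r_K(\theta)$ all lie on the single ray $\{s\theta:s\ge0\}$, and since the origin is interior to $\Omega$ any radial approach is automatically non--tangential. Concretely, if $B_{r_0}(0)\subset\Omega$ and $x=\rho_K(\theta)\theta$, then for $y=s\theta$ with $0\le s<\rho_K(\theta)$ convexity gives
\[
\bigl(1-\tfrac{s}{\rho_K(\theta)}\bigr)B_{r_0}(0)+\tfrac{s}{\rho_K(\theta)}\{x\}\subset\Omega,
\]
a ball about $y$, whence ${\rm dist}(y,\partial\Omega)\ge (r_0/\diam(\Omega))\,|x-y|$. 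This yields a \emph{uniform} aperture $1+\alpha=\diam(\Omega)/r_0$; your claim that ``the cone aperture $\alpha$ necessarily depends on $\theta$'' is therefore mistaken.

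Your proposed route via the supporting hyperplane at $x$ does not deliver the required lower bound: the inclusion $\Omega\subset\{z:\langle z-x,\nu(x)\rangle\le0\}$ only gives ${\rm dist}(x_i,\partial\Omega)\le\langle x-x_i,\nu(x)\rangle=c_\theta\,|x-x_i|$ (shoot from $x_i$ in the direction $\nu(x)$; you exit $\Omega$ before reaching the hyperplane), which is the inequality in the wrong direction. The appeal to ``convergence $\nu(r_{K_i}(\theta))\to\nu(x)$ after passing to a subsequence'' is also worrying, since hypothesis \emph{ii.}\ must be applied along the full sequence. Replace that paragraph with the one--line radial observation and your proof coincides with the paper's.
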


\begin{proof} We may assume that the origin is an interior point of $\Omega$
  and of $\Omega_i$, for every $i\in\N$. Let $\rho$ and $r$ be the radial function
  and the radial map respectively, of $K={\rm cl}(\Omega)$, and, for $i\in\N$, let $\rho_i$
  and $r_i$ be the corresponding objects associated to $K_i={\rm
  cl}(\Omega_i)$. For $i\in\N$ let
$$
A_i=\{\theta\in\sfe\,|\,\mbox{$f_i(r_i(\theta))$ is defined}\}\quad
\mbox{and}\quad A=\bigcap_{i=1}^\infty A_i\,.
$$
Denote by $A^\prime$ the set of those points $\theta\in A$ such that
assumption {\em ii.} of the theorem holds at $r(\theta)$; we have that
$\mh(\sfe\setminus A^\prime)=0$. Note that for every $\theta\in\sfe$ the
sequence $r_i(\theta)$ converges to $r(\theta)$ non--tangentially, since all
these points lie on the same ray from the origin, and the origin is in the
interior of $\Omega$. Hence
\begin{equation}\label{3.6}
\lim f_i(r_i(\theta))=f(r(\theta))\quad\forall\,\theta\in A^\prime\,.
\end{equation}
To conclude the proof, apply the change of variable formula (\ref{3.3}) to
both sides of (\ref{3.5}); then, by (\ref{3.4}), (\ref{3.6}) and assumption
{\em i.}, we may apply the Dominated Convergence Theorem and obtain equality
(\ref{3.5}). 
\end{proof}

\begin{remark}\label{remark3.1}{\em Let $\Omega$, $\Omega_i$, $i\in\N$, be open
  bounded convex subsets of $\R^n$ and assume that the sequence of convex
  bodies $K_i={\rm cl}(\Omega_i)$, $i\in\N$, converges to $K={\rm cl}(\Omega)$
  in the Hausdorff metric. Let $\nu$ and $\nu_i$ denote the Gauss map of $K$
  and of $K_i$, $i\in\N$, respectively. Let 
  $x\in\partial\Omega$ be a point where $\partial\Omega$ is differentiable,
  i.e. $\nu(x)$ is defined and let $x_i\in\partial\Omega_i$, $i\in\N$, be such
  that
$$
\lim_{i\to\infty}x_i=x\,,\quad\mbox{$\partial\Omega_i$ is differentiable at
  $x_i$, for every $i$.}
$$
Then
$$
\lim_{i\to\infty}\nu_i(x_i)=\nu(x)\,.
$$
}\end{remark}

\begin{lemma}\label{lemma3.3} Let $\Omega$, $\Omega_i$, $i\in\N$, be open
  bounded convex subsets of $\R^n$ and assume that the sequence of convex
  bodies $K_i={\rm cl}(\Omega_i)$, $i\in\N$, converges to $K={\rm cl}(\Omega)$
  in the Hausdorff metric. Assume moreover that ${\rm
  cl}(\Omega_i)\subset\Omega$ for every $i\in\N$. Let $u$ be the solution of
  problem (\ref{1.2}) in $\Omega$. Let $x\in\partial\Omega$ be such that
  $\nabla u$ has finite non--tangential limit at $x$ and $\partial\Omega$ is
  differentiable at $x$. Let $x_i\in\partial\Omega_i$, $i\in\N$, be such that
  $x_i$ converges non--tangentially to $x$ and $\partial\Omega_i$ is
  differentiable at $x_i$ for every $i\in\N$. Define 
$$
\nabla_T u(x_i)=\nabla u(x_i)-\langle\nabla
u(x_i),\nu_i(x_i)\rangle\nu_i(x_i)\,,\quad i\in\N\,,
$$  
where $\nu_i$ is the Gauss map of $\cl(\Omega_i)$. Then
$$
\lim_{i\to\infty}\nabla_Tu(x_i)=0\,.
$$
\end{lemma}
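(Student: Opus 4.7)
The strategy is to show that $\nabla u(x_i)$ converges to a vector parallel to $\nu(x)$, while $\nu_i(x_i)\to\nu(x)$, so that the tangential projection must vanish in the limit.

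Since $\cl(\Omega_i)\subset\Omega$, the points $x_i$ lie in $\Omega$ and $\nabla u(x_i)$ is well defined. The hypothesis that $x_i\to x$ non--tangentially in $\Omega$, combined with the existence of the non--tangential limit $L$ of $\nabla u$ at $x$, yields
$$
\nabla u(x_i)\longrightarrow L.
$$
Moreover, Remark \ref{remark3.1} applied to $K_i\to K$ and $x_i\to x$ (together with the differentiability of $\partial\Omega_i$ at $x_i$ and of $\partial\Omega$ at $x$) gives $\nu_i(x_i)\to\nu(x)$.

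The crucial step is to prove that $L$ is parallel to $\nu(x)$. The case $L=0$ is immediate, so assume $L\ne0$. Consider the superlevel sets $\Omega_t=\{y\in\Omega\,|\,u(y)>t\}$; by Theorem \ref{teo2.1} each $\Omega_t$ is open, bounded and convex, and they increase to $\Omega$ as $t\to0^+$, whence $\cl(\Omega_t)\to\cl(\Omega)$ in the Hausdorff metric. By (\ref{2.2}), $\nabla u(x_i)\ne0$ for $i$ large, so the smooth level set $\partial\Omega_{u(x_i)}$ has $x_i$ as a smooth point with outer unit normal $-\nabla u(x_i)/|\nabla u(x_i)|$. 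Since $u(x_i)\to u(x)=0$, the convex bodies $\cl(\Omega_{u(x_i)})$ converge to $\cl(\Omega)$ in the Hausdorff metric; applying Remark \ref{remark3.1} to this sequence at the points $x_i\to x$ we obtain
$$
-\frac{\nabla u(x_i)}{|\nabla u(x_i)|}\longrightarrow\nu(x).
$$
Since $\nabla u(x_i)\to L\ne0$, this forces $L=-|L|\,\nu(x)$, so $L$ is a scalar multiple of $\nu(x)$.

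Passing to the limit in the defining formula for $\nabla_T u(x_i)$ then gives
$$
\nabla_T u(x_i)=\nabla u(x_i)-\<\nabla u(x_i),\nu_i(x_i)\>\nu_i(x_i)\longrightarrow L-\<L,\nu(x)\>\nu(x)=0,
$$
the last equality holding because $L\parallel\nu(x)$. The main obstacle is precisely the step $L\parallel\nu(x)$: it formalises the intuition that the gradient of a function vanishing on $\partial\Omega$ is normal to $\partial\Omega$, but here $u$ is not defined on $\partial\Omega$ and only a non--tangential limit of $\nabla u$ is available, so one cannot differentiate the identity $u=0$ along the boundary. The level--set device bypasses this difficulty by reducing the claim to Hausdorff convergence of boundary normals between smooth convex level sets and $\cl(\Omega)$, which is covered by Remark \ref{remark3.1}.
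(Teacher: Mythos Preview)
Your proof is correct and follows essentially the same route as the paper's. Both arguments dispose of the case $L=0$ immediately, and for $L\ne0$ both introduce the super--level sets $\Omega_{u(x_i)}$, observe that their outer normal at $x_i$ is $-\nabla u(x_i)/|\nabla u(x_i)|$, and then invoke Remark~\ref{remark3.1} twice (once for the $\Omega_i$ and once for the $\Omega_{u(x_i)}$) to conclude that both $\nu_i(x_i)$ and $-\nabla u(x_i)/|\nabla u(x_i)|$ converge to $\nu(x)$, whence the tangential projection vanishes in the limit.
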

 
\begin{proof} If
$$
\lim_{y\to x\,{\rm n.t.}}\nabla u(y)=0\,,
$$
then there is nothing to prove. Then we assume that the above limit is not the
null vector. For every $i\in\N$ we set $\epsilon_i=u(x_i)>0$. Let $\bar\nu_i$
denote the Gauss map of $\cl(\Omega_{\epsilon_i})$, for $i\in\N$; then
$$
-\frac{1}{|\nabla u(x_i)|}\,\nabla
u(x_i)=\bar\nu_i(x_i)\quad\forall\,i\in\N\,.
$$
By the assumptions of the present lemma and by Remark \ref{remark3.1} we have
$$
\lim_{i\to\infty}\nu_i(x_i)=\nu(x)\,,\quad
\lim_{i\to\infty}\bar\nu_i(x_i)=\nu(x)\,.
$$
The claim follows from the definition of $\nabla_T u$. 
\end{proof}

Let $\Omega$, $\Omega^\prime$ be open bounded convex subsets of $\R^n$; let
$u$ be the solution of (\ref{1.2}) in $\Omega$, let $h^\prime$ be the
support function of ${\rm cl}(\Omega^\prime)$ and let $\nu$ be the Gauss map of
$\cl(\Omega)$. We define the functional 
$$
\tau_1(\Omega,\Omega^\prime)=\int_{\partial\Omega}h^\prime(\nu(x))|\nabla
u(x)|^2\,d\mh(x)\,. 
$$
According to Theorem \ref{teo3.1} and by analogy with the case of the volume,
$\tau_1$ can be seen as the {\em mixed torsional rigidity} of $\Omega$ and
$\Omega^\prime$.  

\begin{remark}\label{remark3.2}{\em Let $\Omega$ and $\Omega^\prime$ be as
    above and let $s>0$. If $\nu$ and $\nu_s$ denote the Gauss maps of
    $\cl(\Omega)$ and $\cl(s\Omega)$ respectively, then $\nu_s(sx)=\nu(x)$ for every
    $x\in\partial\Omega$. This fact, together with Remark \ref{remark2.1}
    shows that
\begin{equation}\label{3.7}
\tau_1(s\Omega,\Omega^\prime)=s^{n+1}\tau_1(\Omega,\Omega^\prime)\,.
\end{equation}  
The functional $\tau_1$ is homogeneous with respect to its second
variable also; indeed, if $h_s$ is the support function of $\cl(s\Omega^\prime)$,
then $h_s(X)=sh(X)$ for every $X\in\sfe$. Hence
\begin{equation}\label{3.7b}
\tau_1(\Omega,s\Omega^\prime)=s\tau_1(\Omega,\Omega^\prime)\,.
\end{equation} 
}
\end{remark}

\begin{theorem}\label{teo3.2} The functional $\tau_1$ is continuous
  with respect to the Hausdorff metric, i.e. let $\Omega$
  and $\Omega^\prime$ be as above and let $\Omega_i$, $\Omega^\prime_i$,
  $i\in\N$ be two sequences of open bounded convex sets such that
  $\cl(\Omega_i)$ and  $\cl(\Omega^\prime_i)$ converge in the Hausdorff metric
  to $\cl(\Omega)$ and $\cl(\Omega^\prime)$ respectively, as $i$ tends to infinity. Then
$$
\lim_{i\to\infty}
\tau_1(\Omega_i,\Omega_i^\prime)=
\tau_1(\Omega,\Omega^\prime)\,.
$$
\end{theorem}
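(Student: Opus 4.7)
The strategy is to treat the two variables separately. Continuity in $\Omega'$ is essentially routine: uniform convergence $h_i'\to h'$ on $\sfe$ (which is equivalent to Hausdorff convergence of $\cl(\Omega_i')$ to $\cl(\Omega')$), combined with the bound $|\nabla u|^2\le\diam(\Omega)^2$ of Lemma \ref{lemma2.1}, yields $\tau_1(\Omega,\Omega_i')\to\tau_1(\Omega,\Omega')$ via dominated convergence. The substantive step is the continuity in the first argument, $\tau_1(\Omega_i,\Omega')\to\tau_1(\Omega,\Omega')$; combining the two by a triangle inequality (with a uniform upper bound on $h_i'$) completes the theorem.

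For continuity in the first argument I would first reduce to the case $\cl(\Omega_i)\subset\Omega$. By Lemma \ref{lemma3.1} there exist $\alpha_i\to 1$ with $\alpha_i\cl(\Omega_i)\subset\Omega$ and $\alpha_i\Omega_i\to\Omega$ in the Hausdorff metric, and by the homogeneity relation (\ref{3.7}) of Remark \ref{remark3.2}, $\tau_1(\alpha_i\Omega_i,\Omega')=\alpha_i^{n+1}\tau_1(\Omega_i,\Omega')$; hence continuity for the inner-approximating sequence implies it for the original one. After placing the origin in the interior of $\Omega$ and of every $\Omega_i$, I would rewrite both $\tau_1(\Omega_i,\Omega')$ and $\tau_1(\Omega,\Omega')$ as integrals over $\sfe$ via the radial change of variable (\ref{3.3}) and invoke Lemma \ref{lemma3.2} with integrands $f_i(x)=h'(\nu_i(x))|\nabla u_i(x)|^2$ and $f(x)=h'(\nu(x))|\nabla u(x)|^2$. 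The uniform $L^\infty$ hypothesis required by that lemma is immediate from Lemma \ref{lemma2.1} and the uniform boundedness of $h'$.

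What remains is the pointwise hypothesis of Lemma \ref{lemma3.2}: for $\mh$--a.e.\ $x\in\partial\Omega$ and every sequence $x_i\in\partial\Omega_i$ converging non-tangentially to $x$ (at which $f_i$ is defined), one must have $f_i(x_i)\to f(x)$. The factor $h'(\nu_i(x_i))\to h'(\nu(x))$ is handled by Remark \ref{remark3.1} and continuity of the support function, so the task reduces to
$$
|\nabla u_i(x_i)|^2\longrightarrow|\nabla u(x)|^2.
$$
Since $u_i$ vanishes on $\partial\Omega_i$, at any point $x_i$ where $\nu_i(x_i)$ is defined the non-tangential limit $\nabla u_i(x_i)$ is normal, $\nabla u_i(x_i)=-|\nabla u_i(x_i)|\nu_i(x_i)$; similarly $\nabla u(x)=-|\nabla u(x)|\nu(x)$ at $\mh$--a.e.\ $x$. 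Applying Lemma \ref{lemma3.3} to kill the tangential component of $\nabla u(x_i)$ relative to $\nu_i(x_i)$, together with Proposition \ref{prop2.1} for the non-tangential limit of $\nabla u$, the desired convergence reduces to showing $\nabla(u-u_i)(x_i)\to 0$.

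This last gradient convergence is the main obstacle. The function $w_i:=u-u_i$ is harmonic in $\Omega_i$ with boundary values $u|_{\partial\Omega_i}$, which tend to $0$ uniformly because $u\in C(\cl(\Omega))$ vanishes on $\partial\Omega$ and $\partial\Omega_i\to\partial\Omega$ in the Hausdorff metric; the maximum principle then gives $w_i\to 0$ uniformly on $\cl(\Omega_i)$. Promoting this uniform decay of $w_i$ into a decay of $|\nabla w_i(x_i)|$ at a boundary point of $\Omega_i$ is the technical heart of the argument; I expect this to be accomplished by a barrier argument exploiting the uniform inner-radius and diameter bounds on the $\Omega_i$, so that an interior ball of definite radius provides from one side a Hopf-type bound on the normal derivative of $w_i$, and the comparison of $w_i$ with the shifted torsion potential on a slightly larger convex envelope provides the matching bound from the other side.
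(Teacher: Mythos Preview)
Your reduction (via Lemma \ref{lemma3.1} and the homogeneity (\ref{3.7})) to the case $\cl(\Omega_i)\subset\Omega$, your use of Remark \ref{remark3.1} for $\nu_i(x_i)\to\nu(x)$, and your identification of $\nabla_{T_i}(u-u_i)=\nabla_{T_i}u\to 0$ via Lemma \ref{lemma3.3} all match the paper. The gap is precisely where you locate ``the technical heart'': the barrier argument you sketch for $\nabla w_i(x_i)\to 0$ does not work, and the paper does something genuinely different.

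The barrier fails for two reasons. First, general bounded convex sets have no interior ball condition: $\partial\Omega_i$ need not admit a tangent ball from inside at $x_i$, and even where it does the radius is not uniformly bounded below, so a Hopf-type bound on $\partial_{\nu_i}w_i$ carries no useful constant. Second, and more fundamentally, the fact that $\|w_i\|_{L^\infty(\Omega_i)}\to 0$ by itself cannot control the boundary gradient of a harmonic function on a merely Lipschitz domain; the Dirichlet-to-Neumann map is not bounded from $C^0$ to $C^0$. The comparison with a ``shifted torsion potential'' you allude to does not cure this: such a comparison again only sees the size of $w_i$, not the oscillation of its boundary data along $\partial\Omega_i$.

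What the paper does instead is to abandon the goal of \emph{pointwise} convergence $|\nabla u_i(x_i)|\to|\nabla u(x)|$ (and hence the direct application of Lemma \ref{lemma3.2} to $f_i=h'_i(\nu_i)|\nabla u_i|^2$) and to split differently:
\[
\Big|\tau_1(\Omega_i,\Omega_i')-\tau_1(\Omega,\Omega')\Big|
\le \int_{\partial\Omega_i}|h'_i(\nu_i)|\,\big||\nabla u_i|^2-|\nabla u|^2\big|\,d\mh
+\Big|\int_{\partial\Omega_i}h'_i(\nu_i)|\nabla u|^2-\int_{\partial\Omega}h'(\nu)|\nabla u|^2\Big|.
\]
The second term involves only the fixed $u$ and is handled by Lemma \ref{lemma3.2} exactly as you describe. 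For the first term one bounds $\big||\nabla u_i|^2-|\nabla u|^2\big|\le C|\nabla(u_i-u)|$ and then invokes the Rellich-type $L^2$ estimate for harmonic functions on Lipschitz domains (Corollary 2.1.14 in \cite{Kenig}):
\[
\int_{\partial\Omega_i}|\nabla(u_i-u)|^2\,d\mh\le C\int_{\partial\Omega_i}|\nabla_{T_i}(u_i-u)|^2\,d\mh
= C\int_{\partial\Omega_i}|\nabla_{T_i}u|^2\,d\mh,
\]
with $C$ depending only on the Lipschitz character (hence on the inner radius and diameter, uniformly in $i$). The paper first applies this on the smooth level sets $\Omega_i^\epsilon=\{u_i>\epsilon\}$ and passes to the limit $\epsilon\to 0^+$ (again via Lemma \ref{lemma3.2}) to obtain it on $\partial\Omega_i$. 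Now Lemma \ref{lemma3.3} and Lemma \ref{lemma3.2} give $\int_{\partial\Omega_i}|\nabla_{T_i}u|^2\,d\mh\to 0$, which finishes the proof.

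In short: replace the pointwise barrier step by the $L^2$ Rellich inequality controlling the full boundary gradient of the harmonic $w_i$ by its tangential part; the tangential part is exactly $\nabla_{T_i}u$, whose decay you already have from Lemma \ref{lemma3.3}.
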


\begin{proof} Let $h^\prime$ and $h^\prime_i$, $i\in\N$ be the support functions of
  $\cl(\Omega^\prime)$ and $\cl(\Omega_i^\prime)$ respectively. By the
  assumptions we have
\begin{equation}\label{3.8}
h_i\longrightarrow\,h\quad\mbox{uniformly on $\sfe$}\,.
\end{equation}
A simple argument
  based on Lemma \ref{3.1} and equality (\ref{3.7}) shows that we may assume
  without loss of generality that $\cl(\Omega_i)\subset\Omega$ for every
  $i\in\N$. Let $u_i$ be the solution of problem (\ref{1.2}) in $\Omega_i$ and
  let $\nu_i$ be the Gauss map of $\cl(\Omega_i)$. Our goal is to prove that
  \begin{equation}\label{3.9}
\lim_{i\to\infty}
\int_{\partial\Omega_i}h^\prime_i(\nu_i(x))|\nabla u_i(x)|^2\,d\mh(x)=
\int_{\partial\Omega}h^\prime(\nu(x))|\nabla u(x)|^2\,d\mh(x)\,. 
  \end{equation}
We have that
\begin{eqnarray}\label{3.9b}
&&\left|\int_{\partial\Omega_i}h^\prime_i(\nu_i(x))|\nabla u_i(x)|^2\,d\mh(x)-
\int_{\partial\Omega}h^\prime(\nu(x))|\nabla u(x)|^2\,d\mh(x)\right|
\le\nonumber\\
&&\int_{\partial\Omega_i}|h^\prime_i(\nu_i(x))|\left||\nabla u_i(x)|^2-|\nabla u(x)|^2\right|\,d\mh(x)+\\
&&+\left|\int_{\partial\Omega_i}h^\prime_i(\nu_i(x))|\nabla u(x)|^2\,d\mh(x)-
\int_{\partial\Omega}h^\prime(\nu(x))|\nabla u(x)|^2\,d\mh(x)\right|\,.\nonumber
\end{eqnarray}
We first deal with the second summand of the right hand--side of the above inequality. Let 
$$
f_i(x)=h^\prime_i(\nu_i(x))|\nabla u(x)|^2\,,\quad i\in\N\,,\quad
f(x)=h^\prime(\nu(x))|\nabla u(x)|^2\,.
$$
Note that there exists a constant $C>0$  such that
$$
\diam(\Omega)\,,\,
\diam(\Omega^\prime)\,,\,
\diam(\Omega_i)\,,\,
\diam(\Omega^\prime_i)\le C\quad\forall\,i\in\N\,.
$$
Hence, by Lemma \ref{lemma2.1},
$$
\|f\|_{L^\infty(\partial\Omega)}\,,\,
\|f_i\|_{L^\infty(\partial\Omega_i)}\le C^\prime\quad\forall\,i\in\N\,,
$$
for some $C^\prime>0$. 
Let $x\in\partial\Omega$ be such that $\nabla u$ admits non--tangential limit
at $x$, and let 
$x_i\in\partial\Omega_i$, $i\in\N$, be such that $\nu(x)$ and $\nu_i(x_i)$ are
defined for every $i$, and assume that $x_i$ tends non--tangentially to
$x$. Then, $\nu_i(x_i)\to\nu(x)$ (see Remark \ref{remark3.1}) and, by
(\ref{3.8}), $h^\prime_i(\nu_i(x_i))\to h^\prime(\nu(x))$ and
$|\nabla u(x_i)|^2\to|\nabla u(x)|^2$. Applying
Lemma \ref{lemma3.2} and Proposition \ref{prop2.1} we get
\begin{equation}\label{3.10}
\lim_{i\to\infty}\int_{\partial\Omega_i}h^\prime_i(\nu_i(x))|\nabla u(x)|^2\,d\mh(x)=
\int_{\partial\Omega}h^\prime(\nu(x))|\nabla u(x)|^2\,d\mh(x)\,.
\end{equation}
Next we prove that
\begin{equation}\label{3.11}
\lim_{i\to\infty}\int_{\partial\Omega_i}|h^\prime_i(\nu_i(x))|\left||\nabla u_i(x)|^2-|\nabla
  u(x)|^2\right|\,d\mh(x)=0\,. 
\end{equation}
This fact, together with (\ref{3.10}) and (\ref{3.9b}) leads to (\ref{3.9})
and then to the conclusion of the proof. Note that
\begin{eqnarray*}
\int_{\partial\Omega_i}|h^\prime_i(\nu_i(x))|\left||\nabla u_i(x)|^2-|\nabla
  u(x)|^2\right|\,d\mh(x)
&\le& C\,\int_{\partial\Omega_i}\left||\nabla u_i(x)|^2-|\nabla
  u(x)|^2\right|\,d\mh(x)\\
&\le&
C_1\,\int_{\partial\Omega_i}|\nabla(u_i(x)-u(x))|\,d\mh(x)\\
&\le& C_2\left(\int_{\partial\Omega_i}|\nabla(u_i(x)-u(x))|^2\,d\mh(x)\right)^{\frac{1}{2}}\,,
\end{eqnarray*}
where $C_1$, $C_2>0$ are independent of $i$. Here we used the fact that, since
the diameters of $\cl(\Omega_i)$ are uniformly bounded, the same holds for
$\mh(\partial\Omega_i)$. In the remaining part of the proof
we will show that 
\begin{equation}
  \label{3.12}
\lim_{i\to\infty}\int_{\partial\Omega_i}|\nabla(u_i(x)-u(x))|^2\,d\mh(x)=0\,. 
\end{equation}
For $i\in\N$ and $\epsilon>0$ sufficiently small, consider the set 
$$
\Omega^\epsilon_i=\{x\in\Omega_i\,|\,u_i(x)>\epsilon\}\,.
$$
$\Omega^\epsilon_i$ is an open set with boundary of class $C^\infty$;
moreover, for fixed $i\in\N$, $\cl(\Omega^\epsilon_i)$ converges to $\cl(\Omega_i)$ in the Hausdorff
metric as $\epsilon\to0^+$. The function $u_i-u$ is harmonic in
$\Omega^\epsilon_i$ and $u_i\equiv\epsilon$ on $\partial\Omega^\epsilon_i$. We
apply Corollary 2.1.14 of \cite{Kenig} to get
\begin{equation}\label{3.14}
\int_{\partial\Omega^\epsilon_i}|\nabla(u_i(x)-u(x))|^2\,d\mh(x)\le
C_3\int_{\partial\Omega^\epsilon_i}|\nabla_{T_i^\epsilon} u(x)|^2\,d\mh(x)\,, 
\end{equation}
where $C_3$ is a constant depending only on the inner radius and the diameter of
$\Omega$ and $\nabla_{T_i^\epsilon} u$ is the tangential component of $\nabla u$ to
$\partial\Omega^\epsilon_i$:
$$
\nabla_{T_i^\epsilon} u=\nabla u-\langle\nabla u,\nu^\epsilon_i\rangle\,\nu^\epsilon_i
$$
(here $\nu^\epsilon_i$ is the Gauss map of
$\cl(\Omega^\epsilon_i)$). As above, we may apply Lemma \ref{lemma2.1},
Proposition \ref{prop2.1}, Remark \ref{remark3.1} and Lemma \ref{lemma3.2} to
deduce (for a fixed $i$)
\begin{eqnarray*}
&&\lim_{\epsilon\to0^+}\int_{\partial\Omega^\epsilon_i}|\nabla(u_i(x)-u(x))|^2\,d\mh(x)=
\int_{\partial\Omega_i}|\nabla(u_i(x)-u(x))|^2\,d\mh(x)\,,\\
&&\lim_{\epsilon\to0^+}\int_{\partial\Omega^\epsilon_i}|\nabla_{T_i^\epsilon}
u(x)|^2\,d\mh(x)=\int_{\partial\Omega_i}|\nabla_{T_i} u(x)|^2\,d\mh(x)\,,
\end{eqnarray*}
where
$$
\nabla_{T_i} u=\nabla u-\langle\nabla u,\nu_i\rangle\,\nu_i
$$
is defined $\mh$--a.e. on $\partial\Omega_i$. Hence
\begin{equation}\label{3.15}
\int_{\partial\Omega_i}|\nabla(u_i(x)-u(x))|^2\,d\mh(x)\le
C_3\int_{\partial\Omega_i}|\nabla_{T_i} u(x)|^2\,d\mh(x)\,.
\end{equation}
By Lemma \ref{lemma3.3}, Lemma \ref{lemma2.1} and Lemma \ref{lemma3.2} we get
\begin{equation}\label{3.16}
\lim_{i\to\infty}\int_{\partial\Omega_i}|\nabla_T u(x)|^2\,d\mh(x)=0\,. 
\end{equation}
Equality (\ref{3.12}) follows from (\ref{3.15}) and (\ref{3.16}); the proof is
complete. 
\end{proof}

\begin{proof}[Proof of Theorem \ref{teo3.1}] Let us start with formula
  (\ref{3.1}). We recall that the equality is true under the assumption that
  the boundary of the domain is of class $C^2$ (see \cite{Colesanti}). Let
  $\epsilon>0$ be smaller than $\max_{\bar\Omega}u$ and consider the
  super--level set 
$$
\Omega_\epsilon=\{x\in\Omega\,|\,u(x)>\epsilon\}\,.
$$
We know that $\Omega_\epsilon$ is convex, $\partial\Omega_\epsilon$ is of
class $C^\infty$ and $\cl(\Omega_\epsilon)\to\cl(\Omega)$ in the Hausdorff
metric as $\epsilon\to0^+$. Notice that the function $u_\epsilon=u-\epsilon$
is the solution of problem (\ref{1.2}) in $\Omega_\epsilon$; consequently
$$
\tau(\Omega_\epsilon)=\int_{\Omega_\epsilon}|\nabla u|^2\,dx\,.
$$    
As $\|\nabla\|_{L^\infty(\Omega)}<\infty$ and
$\ml(\Omega\setminus\Omega_\epsilon)\to0$ when $\epsilon\to0^+$ (where $\ml$
denotes the Lebesgue measure in $\R^n$), we obtain
$$
\lim_{\epsilon\to0^+}\tau(\Omega_\epsilon)=\tau(\Omega)\,.
$$
Moreover we have
$$
\tau(\Omega_\epsilon)=\frac{1}{n+2}
\int_{\partial\Omega_\epsilon}h_\epsilon(\nu_\epsilon(x))|\nabla
u(x)|^2\,d\mh(x)=\frac{1}{n+2}
\tau_1(\Omega_\epsilon,\Omega_\epsilon)\,,
$$
where $h_\epsilon$ and $\nu_\epsilon$ are the support function and the Gauss
map of $\Omega_\epsilon$ respectively. Passing to the limit for
$\epsilon\to0^+$ and using Theorem \ref{teo3.2} we get
$$
\tau(\Omega)=\frac{1}{n+2}\tau_1(\Omega,\Omega)
$$
i.e. (\ref{3.1}). 

Next we prove (\ref{3.2}). Let $\Omega_i$,
$\Omega^\prime_i$, $i\in\N$ be two sequences of open bounded convex sets, with
boundaries of class $C^2$, such that $\cl(\Omega_i)\to\cl(\Omega)$ and
$\cl(\Omega^\prime_i)\to\cl(\Omega^\prime)$ as $i$ tends to infinity, in the
Hausdorff metric. For $t\ge0$ let
$$
\Omega_t=\Omega+t\Omega^\prime\,,\quad
\Omega_{i,t}=\Omega_i+t\Omega^\prime_i\,,\quad i\in\N\,.
$$
We know that (see \cite[equality (30)]{Colesanti}), for every
$i\in\N$ and every $t>0$ we have
\begin{equation}\label{3.17}
\frac{d}{dt}\tau(\Omega_{i,t})=
\int_{\partial\Omega_{i,t}}h^\prime_i(\nu_{i,t}(x))|\nabla
u_{i,t}(x)|^2\,d\mh(x)
=\tau_1(\Omega_{i,t},\Omega^\prime_i)\,,
\end{equation}
where: $h^\prime_i$ is the support function of $\cl(\Omega^\prime_i)$, $\nu_{i,t}$
is the Gauss map of $\cl(\Omega_{i,t})$, $u_{i,t}$ is the solution of problem
(\ref{1.2}) in $\Omega_{i,t}$.
By (\ref{3.17}) we have
\begin{equation}\label{3.18}
\tau(\Omega_{i,t})-\tau(\Omega_i)=\int_0^t\tau_1(\Omega_{i,s},\Omega^\prime_i)\,ds\,,\quad t\ge0\,.  
\end{equation}
Moreover, for every $t\ge0$, $\cl(\Omega_{i,t})$ converges to $\cl(\Omega_t)$ as
$i$ tends to infinity. Consequently
\begin{equation}\label{3.19}
\lim_{i\to\infty}\tau(\Omega_{i,t})=\tau(\Omega_t)\quad\mbox{and}
\quad\lim_{i\to\infty}\tau_1(\Omega_{i,t},\Omega^\prime_i)=\tau_1(\Omega_t,\Omega^\prime)\,.
\end{equation}
If $t$ ranges in a bounded right neighborhood of $0$, say $[0,1]$,
then the diameters of the sets $\Omega_{i,t}$, $i\in\N$, are uniformly bounded. 
Hence, by Lemma \ref{lemma2.1} and the definition of $\tau_1$ there exists a
constant $C>0$ such that 
\begin{equation}\label{3.20}
|\tau_1(\Omega_{i,t},\Omega^\prime_i)|\le
C\quad\forall\,i\in\N\,,\,\forall\,t\in[0,1]\,.
\end{equation}
Using (\ref{3.18}), (\ref{3.19}), (\ref{3.20}) and the Dominated Convergence
Theorem we obtain
\begin{equation}\label{3.21}
\tau(\Omega_t)-\tau(\Omega)=\int_0^t\tau_1(\Omega_s,\Omega^\prime)\,ds\,,\quad t\in[0,1]\,.  
\end{equation}
Equality (\ref{3.2}) follows since the function
$s\to\tau_1(\Omega_s,\Omega^\prime)$ is continuous.
\end{proof}

By the proof of Theorem \ref{teo3.1}, and in particular from (\ref{3.21}), we
obtain the following result.

\begin{cor}\label{cor3.1} Let $\Omega$ and $\Omega^\prime$ be open bounded convex subsets of
  $\R^n$. The function $t\to\tau(\Omega+t\Omega^\prime)$, defined for $t\ge0$,
  is differentiable for every $t>0$ and
  \begin{equation}
    \label{3.22}
\frac{d}{dt}\tau(\Omega+t\Omega^\prime)=\tau_1(\Omega+t\Omega^\prime,\Omega^\prime)\,,\quad\forall\,t>0\,.     
  \end{equation}  
\end{cor}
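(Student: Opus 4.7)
The plan is to read this off essentially for free from equation (\ref{3.21}) that appeared in the proof of Theorem \ref{teo3.1}, together with the continuity of $\tau_1$ provided by Theorem \ref{teo3.2}. The identity (\ref{3.21}) states
$$
\tau(\Omega+t\Omega')-\tau(\Omega)=\int_0^t\tau_1(\Omega+s\Omega',\Omega')\,ds
$$
for $t\in[0,1]$. Since the integrand is a continuous function of $s$ (by Theorem \ref{teo3.2}, because $\cl(\Omega+s\Omega')$ depends continuously on $s$ in the Hausdorff metric), the fundamental theorem of calculus immediately gives that $t\mapsto\tau(\Omega+t\Omega')$ is differentiable on $(0,1)$ with the claimed derivative $\tau_1(\Omega+t\Omega',\Omega')$.

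The only remaining issue is to remove the restriction $t\in[0,1]$ and obtain the formula for every $t>0$. For this I would use a simple restart trick: fix any $t_0>0$ and apply (\ref{3.21}) with $\Omega$ replaced by $\tilde\Omega:=\Omega+t_0\Omega'$, which is again an open bounded convex set. Since $\tilde\Omega+s\Omega'=\Omega+(t_0+s)\Omega'$ (Minkowski addition is associative), the identity becomes
$$
\tau(\Omega+(t_0+s)\Omega')-\tau(\Omega+t_0\Omega')=\int_0^s\tau_1(\Omega+(t_0+r)\Omega',\Omega')\,dr,\quad s\in[0,1],
$$
and a change of variable $r'=t_0+r$ rewrites the right-hand side as $\int_{t_0}^{t_0+s}\tau_1(\Omega+r'\Omega',\Omega')\,dr'$. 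Varying $t_0>0$, this extends (\ref{3.21}) to the full half-line: for every $T>0$,
$$
\tau(\Omega+t\Omega')-\tau(\Omega)=\int_0^t\tau_1(\Omega+s\Omega',\Omega')\,ds,\quad t\in[0,T].
$$

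Combining this extended integral representation with the continuity of $s\mapsto\tau_1(\Omega+s\Omega',\Omega')$ on $[0,\infty)$, the fundamental theorem of calculus yields differentiability for every $t>0$ and the formula (\ref{3.22}). There is no serious obstacle here; the only point requiring a small amount of care is verifying that Theorem \ref{teo3.2} genuinely applies along the one-parameter family $\Omega+s\Omega'$, but this is immediate since $\cl(\Omega+s_n\Omega')\to\cl(\Omega+s\Omega')$ in the Hausdorff metric whenever $s_n\to s$, and the second argument is held fixed at $\Omega'$.
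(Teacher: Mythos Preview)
Your proof is correct and follows essentially the same route as the paper: the corollary is stated immediately after Theorem~\ref{teo3.1} with the single remark that it follows ``from (\ref{3.21})'', together with the continuity of $s\mapsto\tau_1(\Omega_s,\Omega')$ already used there. Your restart trick to pass from $t\in[0,1]$ to all $t>0$ is a clean way to make explicit a point the paper leaves implicit (the interval $[0,1]$ in (\ref{3.21}) was chosen only for convenience in the dominated convergence step and could have been any bounded interval).
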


\begin{remark}\label{remark3.3}
{\em Let $\Omega$ and $\Omega^\prime$ be as above. For
    $s\in[0,1]$ we consider the function
$$
m(s)=\tau(s\Omega+(1-s)\Omega^\prime)\,.
$$
By homogeneity, for $s\in(0,1)$ we can write
$$
m(s)=s^{n+2}\tau\left(\Omega+\frac{1-s}{s}\Omega^\prime\right)\,.
$$  
Using Corollary \ref{cor3.1} and Remark \ref{remark3.2} is quite simple to
deduce the following equalities: 
\begin{eqnarray}\label{3.23}
m^\prime(s)&=&\tau_1\left(s\Omega+(1-s)\Omega^\prime,\Omega
+\frac{1-s}{s}\Omega^\prime\right)-
\tau_1\left(s\Omega+(1-s)\Omega^\prime,\frac{1}{s}\Omega^\prime\right)\nonumber\\
&=&\int_{\partial\Omega_s}(h(\nu_s(x))-h^\prime(\nu_s(x)))|\nabla u_s(x)|^2\,d\mh(x) 
\,,
\end{eqnarray}
where: $\Omega_s=s\Omega+(1-s)\Omega^\prime$, $u_s$ is the solution of problem
(\ref{1.2}) in $\Omega_s$, $h$ and $h^\prime$ are the
support functions of $\Omega$ and $\Omega^\prime$ respectively, and $\nu_s$ is
the Gauss map of $\cl(\Omega_s)$.}
\end{remark}

\section{An extension of the Hadamard formula}\label{sec4} 

Let $g\,:\,\sfe\to\R$ be continuous and positive; we consider the set
$$
B[g]=\{x\in\R^n\,:\,\langle x,X\rangle\le g(X)\,,\;\forall\,X\in\sfe\}\,.
$$
$B[g]$ is a compact convex set, i.e. a convex body, and the origin is an
interior point of $B[g]$. Moreover,
$$
h_{B[g]}(X)\le g(X)\quad\forall\,X\in\sfe\,.
$$ 
If $K$ is a convex body such that $0\in\interno(K)$ and $f\in
C(\sfe)$, then for $t\in\R$ and $|t|$ sufficiently small
$h_K(X)+tf(X)>0$ for every $X\in\sfe$,
so that $B[h_K+tf]$ is well defined. The aim of the present section is to
prove the following result.

\begin{theorem}\label{teo4.1} Let $\Omega$ be an open bounded convex subset of
  $\R^n$ containing the origin and let $f\in C(\sfe)$. Let $h$ and $\nu$ be the support
  function and the Gauss map of $\cl(\Omega)$ respectively. Then
\begin{equation}\label{4.1}
\left.\frac{d}{dt}\tau(\interno(B[h+tf]))\right|_{t=0}=    
\int_{\partial\Omega}f(\nu(x))|\nabla u(x)|^2\,d\mh(x)\,.
\end{equation}
\end{theorem}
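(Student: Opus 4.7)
I will follow Jerison's scheme and extend the Hadamard formula (\ref{3.2bis}) in three stages: (i) the case where $f$ is a support function, (ii) the case where $f$ is a difference of support functions, and (iii) general $f\in C(\sfe)$ by uniform approximation. It suffices to compute the right derivative at $t = 0^+$, since the left derivative follows from the same argument applied to $-f$.

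For stage (i), assume $f = h^\prime$ is the support function of a convex body $\cl(\Omega^\prime)$. Then for $t \ge 0$ sufficiently small the Minkowski-additivity of support functions gives $h + tf = h_{\cl(\Omega+t\Omega^\prime)}$, so $B[h+tf] = \cl(\Omega + t\Omega^\prime)$ and (\ref{4.1}) collapses to (\ref{3.2bis}).

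For stage (ii), write $f = h_{L_1} - h_{L_2}$ with $L_1, L_2$ convex bodies containing the origin in their interiors. The set $B[h+tf]$ has no direct Minkowski-sum representation, so the key ingredient is the geometric lemma from \cite{Jerison2}, whose consequence in the present notation is the Hausdorff-distance sandwich
\[
B[h+tf] + tL_2 \;\subseteq\; \cl(\Omega) + tL_1 \;\subseteq\; B[h+tf] + tL_2 + \eta(t)\overline{B},\qquad \eta(t) = o(t)\ \mbox{as}\ t \to 0^+,
\]
where $\overline{B}$ is the closed unit ball. The left inclusion is a direct support-function comparison from $h_{B[h+tf]} \le h + tf$; the right inclusion is Jerison's delicate geometric input. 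Granted the sandwich, Corollary \ref{cor3.1} applied to each of the two Minkowski sums gives
\[
\tau(\cl(\Omega)+tL_1) = \tau(\Omega) + t\,\tau_1(\Omega,L_1) + o(t),\qquad \tau(B[h+tf]+tL_2) = \tau(B[h+tf]) + t\,\tau_1(B[h+tf],L_2) + o(t).
\]
A Lipschitz-type bound $\tau(K + \eta\overline{B}) - \tau(K) = O(\eta)$, valid uniformly on bounded families (from (\ref{3.18}) and Lemma \ref{lemma2.1}), absorbs the $\eta(t)$ error and converts the sandwich into $\tau(\cl(\Omega)+tL_1) - \tau(B[h+tf]+tL_2) = o(t)$. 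Since $B[h+tf] \to \cl(\Omega)$ in the Hausdorff metric (from $h+tf \to h$ uniformly together with the monotonicity of $B[\cdot]$), Theorem \ref{teo3.2} replaces $\tau_1(B[h+tf], L_2)$ by $\tau_1(\Omega, L_2) + o(1)$, and Corollary \ref{cor3.0} identifies $\tau_1(\Omega, L_j) = \int h_{L_j}\,d\mu_{\tau,\Omega}$, proving (\ref{4.1}) in this case.

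For stage (iii), differences of support functions are uniformly dense in $C(\sfe)$, so given $\epsilon > 0$ I pick such differences $f_\epsilon \le f \le F_\epsilon$ on $\sfe$ with $\|F_\epsilon - f_\epsilon\|_\infty \le 2\epsilon$. Monotonicity of the Wulff construction $B[\cdot]$ yields $B[h+tf_\epsilon] \subseteq B[h+tf] \subseteq B[h+tF_\epsilon]$, and the variational characterization of $\tau$ makes $\tau$ monotone in the domain, so the difference quotient at $0^+$ for $\tau(B[h+tf])$ is squeezed between those for $f_\epsilon$ and $F_\epsilon$; letting $t \to 0^+$ via stage (ii) and then $\epsilon \to 0^+$ via finiteness of $\mu_{\tau,\Omega}$ yields (\ref{4.1}). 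The one non-routine ingredient in the whole scheme, and the main obstacle, is the right inclusion in the sandwich of stage (ii): one must control how much the Wulff body $B[h+tf]$ can contract below the formal Minkowski combination suggested by $h + tf = h + t(h_{L_1}-h_{L_2})$, and show that this contraction is $o(t)$ uniformly. Everything else is supplied by Corollaries \ref{cor3.0} and \ref{cor3.1} and the continuity Theorem \ref{teo3.2}.
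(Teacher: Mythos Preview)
Your three--stage scheme is different from the paper's route, and the crux of stage (ii) --- the uniform sandwich with $\eta(t)=o(t)$ --- is in fact false for non--smooth $\Omega$, so the argument has a genuine gap.

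Take $n=2$, $\Omega=(-1,1)^2$, so $h(X)=|X_1|+|X_2|$ on $S^1$, and let $L_1=\overline B_1$, $L_2=\overline B_2$, so $f=h_{L_1}-h_{L_2}\equiv -1$. Then $B[h+tf]=B[h-t]=[-(1-t),1-t]^2$ and $h_t(X)=(1-t)(|X_1|+|X_2|)$. The smallest $\eta$ for which $\cl(\Omega)+tL_1\subset B[h+tf]+tL_2+\eta\overline B$ is determined by $\sup_X\big(h(X)+t-h_t(X)-2t\big)=\sup_X t(|X_1|+|X_2|-1)=t(\sqrt2-1)$. Hence $\eta(t)=t(\sqrt2-1)$ is exactly of order $t$, not $o(t)$; the bad direction is the corner normal $X=(1/\sqrt2,1/\sqrt2)$. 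With only $\eta(t)=O(t)$, your Lipschitz bound $\tau(K+\eta\overline B)-\tau(K)=O(\eta)$ yields merely $\tau(\cl(\Omega)+tL_1)-\tau(B[h+tf]+tL_2)=O(t)$, and after subtracting the two first--order expansions you get only a one--sided bound $\limsup_{t\to0^+}\frac{\tau(B[h+tf])-\tau(\Omega)}{t}\le\int f\,d\mu_{\tau,\Omega}$, not the matching $\liminf$. Stage (iii) does not rescue this: the squeeze there presupposes that stage (ii) already produces the full derivative for differences of support functions.

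What Jerison's lemma actually supplies (Lemmas \ref{lemma4.1} and \ref{lemma4.2} here) is a \emph{uniform bound} $|h_t-h|\le Ct$ together with a \emph{pointwise} limit $(h_t(X_t)-h(X_t))/t\to f(X)$ valid only at directions $X=\nu(x)$ where $\nu$ is continuous --- i.e.\ $\mathcal H^{n-1}$--a.e.\ on $\partial\Omega$, but not uniformly. The paper exploits exactly this a.e.\ information: it interpolates $\Omega_{s,t}=s\,\interno(B[h+tf])+(1-s)\Omega$, writes $\tau(B[h+tf])-\tau(\Omega)$ via Remark \ref{remark3.3} as $\int_0^1\int_{\partial\Omega_{s,t}}|\nabla u_{s,t}|^2\,(h_t-h)(\nu_{s,t})\,d\mathcal H^{n-1}\,ds$, passes to radial coordinates, and then lets $t\to0$ under the integral by Dominated Convergence --- the uniform bound from Lemma \ref{lemma4.1} dominates, and the pointwise a.e.\ convergence from Lemma \ref{lemma4.2} gives the limit $f(\nu(r(\theta)))$. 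No global $o(t)$ sandwich is needed, and the corner directions (a null set) are simply irrelevant to the integral. To repair your argument along its own lines you would have to either restrict first to $C^1$ bodies $\Omega$ (where the convergence in Lemma \ref{lemma4.2} is uniform) and then add an approximation layer for $\Omega$, or replace the Hausdorff--sandwich by an integral argument that tolerates failure on a null set --- which is essentially what the paper does.
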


\begin{cor}\label{cor4.0} In the assumptions and notations of Theorem
  \ref{teo4.1} we have
\begin{equation}\label{4.1bis}
\left.\frac{d}{dt}\tau(\interno(B[h+tf]))\right|_{t=0}=    
\int_{\sfe}f(X)\,d\mu_{\tau,\Omega}(X)\,.
\end{equation}
\end{cor}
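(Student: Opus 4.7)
The plan is to deduce (\ref{4.1bis}) from Theorem \ref{teo4.1} by recognizing that $\mu_{\tau,\Omega}$ is, by Definition \ref{def2.1}, precisely the pushforward under the Gauss map $\nu$ of the (finite) Borel measure $|\nabla u|^2\,d\mh$ on $\partial\Omega$. Once this is observed, the right-hand side of (\ref{4.1}) is, by a standard change-of-variables argument, equal to $\int_{\sfe}f(X)\,d\mu_{\tau,\Omega}(X)$, and there is nothing left to do.

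More concretely, I would first remark that $\mu_{\tau,\Omega}$ is a finite Borel measure on the compact set $\sfe$: indeed, $\mu_{\tau,\Omega}(\sfe)=\int_{\partial\Omega}|\nabla u|^2\,d\mh<\infty$ by Lemma \ref{lemma2.1} (which bounds $|\nabla u|$ uniformly) together with the finiteness of $\mh(\partial\Omega)$. I would then verify the change-of-variables identity
$$
\int_{\partial\Omega}\varphi(\nu(x))|\nabla u(x)|^2\,d\mh(x)=\int_{\sfe}\varphi(X)\,d\mu_{\tau,\Omega}(X)
$$
in three steps: (i) for $\varphi=\chi_\eta$ with $\eta\subset\sfe$ Borel, the identity is exactly Definition \ref{def2.1}, after noting that $\nu^{-1}(\eta)$ is $\mh$-measurable by \cite[Lemma 2.2.11]{Schneider}; (ii) by linearity the identity extends to finite linear combinations of such characteristic functions, i.e.\ to simple Borel functions on $\sfe$; (iii) for a continuous $f\in C(\sfe)$ one approximates $f$ uniformly on the compact $\sfe$ by simple Borel functions $\varphi_k$, and passes to the limit on both sides using the uniform boundedness of $|\nabla u|^2$ (Lemma \ref{lemma2.1}) on the left and the finiteness of $\mu_{\tau,\Omega}$ on the right.

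Finally, applying this identity with $\varphi=f$ to the right-hand side of (\ref{4.1}) in Theorem \ref{teo4.1} yields
$$
\left.\frac{d}{dt}\tau(\interno(B[h+tf]))\right|_{t=0}=\int_{\partial\Omega}f(\nu(x))|\nabla u(x)|^2\,d\mh(x)=\int_{\sfe}f(X)\,d\mu_{\tau,\Omega}(X),
$$
which is precisely (\ref{4.1bis}). There is no genuine obstacle here; the only mild point to check is the measurability of $\nu^{-1}(\eta)$ for Borel $\eta$, which is handled by the reference to Schneider already used in Section \ref{sec2}. The corollary is thus essentially a reformulation of Theorem \ref{teo4.1} in the language of the measure $\mu_{\tau,\Omega}$, in exact parallel with the way Corollary \ref{cor3.0} follows from Theorem \ref{teo3.1}.
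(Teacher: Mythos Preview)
Your proposal is correct and matches the paper's approach: the paper gives no separate proof of Corollary \ref{cor4.0}, treating it as an immediate consequence of Theorem \ref{teo4.1} together with Definition \ref{def2.1}, exactly as it did for Corollary \ref{cor3.0} after Theorem \ref{teo3.1}. You have simply spelled out the routine pushforward/change-of-variables step that the paper leaves implicit.
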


For the proof of this result we follow the argument presented in
\cite{Jerison3} and we use two lemmas proved therein, that we quote without
proof. 

\begin{lemma}\label{lemma4.1} Let $\Omega$ and $f$ be as in Theorem
  \ref{teo4.1} and let $r,R>0$ be such that 
$$
B_r\subset\Omega\subset B_R\,,
$$
where $B_r$ and $B_R$ are the balls centered at the origin with radii $r$ and
$R$ respectively. Let $h_t$ be the support function of $B[h+tf]$ (for
  $|t|$ sufficiently small) and
$$
M=\max_{\sfe}|f|\,.
$$
Then
\begin{equation}\label{4.2}
\sup_{0\le
  t\le\frac{r}{2M}}\frac{1}{t}|h_t(X)-h(X)|\le\frac{RM}{r}\quad\forall\, x\in\sfe\,.  
\end{equation}
\end{lemma}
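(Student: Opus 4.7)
The plan is to bound $h_t(X)-h(X)$ from above and below separately, exploiting the fact that $h_t$ is the support function of the largest convex body whose support function is dominated by $h+tf$.

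First, the upper bound is essentially tautological. By the very definition of $B[h+tf]$, any point $x\in B[h+tf]$ satisfies $\langle x,X\rangle\le h(X)+tf(X)$ for every $X\in\sfe$, hence
\[
h_t(X)=\sup_{x\in B[h+tf]}\langle x,X\rangle\le h(X)+tf(X)\le h(X)+tM.
\]
Since $r\le R$, this gives $h_t(X)-h(X)\le tM\le tRM/r$, which is already the desired estimate on this side.

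For the reverse inequality, I would exhibit a concrete convex body contained in $B[h+tf]$ whose support function is close to $h$, namely a small shrinking $\lambda\Omega$ of $\Omega$ itself. Its support function equals $\lambda h$, so $\lambda\Omega\subset B[h+tf]$ is equivalent to the pointwise inequality
\[
\lambda h(X)\le h(X)+tf(X)\qquad\forall\,X\in\sfe,
\]
i.e.\ $(1-\lambda)h(X)\ge -tf(X)$. The hypothesis $B_r\subset\Omega$ gives $h(X)\ge r$ on $\sfe$, while $-tf(X)\le tM$. Hence choosing $\lambda=1-tM/r$ forces the inequality to hold; and the range $0\le t\le r/(2M)$ guarantees $\lambda\ge 1/2>0$, so this is a legitimate convex body.

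Combining these, $h_t\ge\lambda h$ on $\sfe$, so using $h(X)\le R$ from $\Omega\subset B_R$ we get
\[
h(X)-h_t(X)\le (1-\lambda)h(X)=\frac{tM}{r}h(X)\le\frac{tRM}{r}.
\]
Together with the upper bound this yields $|h_t(X)-h(X)|\le tRM/r$ for every $X\in\sfe$ and every $t\in[0,r/(2M)]$, which is (\ref{4.2}). The only conceptually nontrivial step is the lower bound, and the scaling trick $\lambda=1-tM/r$ is precisely designed so that the lower barrier on $h$ provided by $B_r\subset\Omega$ absorbs the potentially negative perturbation $tf$; the rest is bookkeeping.
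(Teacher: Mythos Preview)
The paper does not actually prove Lemma \ref{lemma4.1}: immediately before stating it the authors write that they ``use two lemmas proved therein, that we quote without proof,'' referring to Jerison's paper \cite{Jerison3}. So there is no in-paper proof to compare against.

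That said, your argument is correct and is essentially the standard one. The upper bound $h_t\le h+tf\le h+tM$ is indeed tautological from the definition of $B[\cdot]$, and your lower bound via the inclusion $\lambda K\subset B[h+tf]$ with $\lambda=1-tM/r$ is exactly the right idea: the verification $(1-\lambda)h(X)\ge(tM/r)r=tM\ge -tf(X)$ uses $h\ge r$ from $B_r\subset\Omega$, and then $h-h_t\le(1-\lambda)h\le(tM/r)R$ uses $h\le R$ from $\Omega\subset B_R$. The restriction $t\le r/(2M)$ is used only to keep $\lambda\ge 1/2>0$, as you note. This matches the argument in \cite{Jerison3}.
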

  
\begin{lemma}\label{lemma4.2} In the notations of Lemma \ref{lemma4.1}, let
  $x\in\partial\Omega$ be such that $\nu$ is continuous at $x$ and
  let $X=\nu(x)$. Let $X_t$ be a set of points on $\sfe$ such that
  $X_t\to X$ as $t\to0$. Then
\begin{equation}\label{4.3}
\lim_{t\to0}\frac{h_t(X_t)-h(X_t)}{t}=f(X)\,.    
\end{equation}
\end{lemma}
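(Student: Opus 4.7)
The plan is to establish the two-sided limit by bounding the quotient $[h_t(X_t)-h(X_t)]/t$ from above and from below separately. By the very definition of $B[h+tf]$, every point $y\in B[h+tf]$ satisfies $\langle y,Y\rangle\le h(Y)+tf(Y)$ for all $Y\in\sfe$, so that $h_t(Y)\le h(Y)+tf(Y)$ for every $Y$. Applied to $Y=X_t$ and divided by $t$, this inequality together with $f(X_t)\to f(X)$ gives
\[
\limsup_{t\to0^+}\frac{h_t(X_t)-h(X_t)}{t}\le f(X),\qquad\liminf_{t\to0^-}\frac{h_t(X_t)-h(X_t)}{t}\ge f(X).
\]
What remains is the reverse pair of inequalities, i.e., to produce, for every $\epsilon>0$ and every $|t|$ sufficiently small, a point $y_t\in B[h+tf]$ with $\langle y_t,X_t\rangle\ge h(X_t)+t\bigl(f(X_t)-\mathrm{sign}(t)\,\epsilon\bigr)$.

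The natural candidate is $y_t=p_t+t\bigl(f(X_t)-\mathrm{sign}(t)\,\epsilon\bigr)X_t$, where $p_t\in\partial\Omega$ is any supporting point with $\langle p_t,X_t\rangle=h(X_t)$ (such $p_t$ exists by compactness of $\partial\Omega$). With this choice the equality $\langle y_t,X_t\rangle=h(X_t)+t\bigl(f(X_t)-\mathrm{sign}(t)\,\epsilon\bigr)$ is automatic, and the membership $y_t\in B[h+tf]$ reduces to the scalar inequality
\[
t\bigl[\bigl(f(X_t)-\mathrm{sign}(t)\,\epsilon\bigr)\langle X_t,Y\rangle-f(Y)\bigr]\le h(Y)-\langle p_t,Y\rangle=:g_Y^{(t)}\qquad\forall\,Y\in\sfe.
\]
Split $\sfe$ into a small spherical cap $U_\delta=\{Y:|Y-X|<\delta\}$ and its complement. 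Inside $U_\delta$, the continuity of $f$ together with $\langle X_t,X_t\rangle=1$ force the left-hand side to be at most $-|t|\epsilon/2$ (for $\delta$ small enough, depending on $\epsilon$), whereas $g_Y^{(t)}\ge 0$, so the inequality holds trivially. Outside $U_\delta$ the left-hand side is bounded by $C|t|$, and the inequality follows provided one has a uniform lower bound $g_Y^{(t)}\ge\gamma>0$ on that set.

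The uniform positivity of $g_Y^{(t)}$ away from $X$ is where the hypothesis that $\nu$ is continuous at $x$ enters. Since $X=\nu(x)$ is then the unique outer normal of $\cl(\Omega)$ at $x$, the function $g_Y:=h(Y)-\langle x,Y\rangle$ vanishes only at $Y=X$ (otherwise $Y$ would be a second outer normal at $x$); by continuity and compactness one obtains $g_Y\ge\gamma_0>0$ on $\sfe\setminus U_\delta$. If we can arrange $p_t\to x$, then $g_Y^{(t)}\to g_Y$ uniformly in $Y$, and the required bound $g_Y^{(t)}\ge\gamma_0/2$ follows for small $|t|$.

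The main obstacle I expect is this last selection: the supporting face $F(X_t)=\{z\in\partial\Omega:\langle z,X_t\rangle=h(X_t)\}$ is only upper-semicontinuous in $X_t$, and it may happen that no choice of $p_t\in F(X_t)$ approaches $x$, e.g.\ when $x$ lies in the relative interior of a flat facet of $\partial\Omega$. In such a case the ansatz must be refined to $y_t=p_t+t\,v_t$, where $v_t$ is chosen so that $\langle v_t,Y\rangle\le f(Y)-\mathrm{sign}(t)\,\epsilon$ for every $Y$ in the normal cone of $\cl(\Omega)$ at $p_t$, while keeping $\langle v_t,X_t\rangle$ as close as possible to $f(X_t)$. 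The existence of such a $v_t$ is precisely the content of the geometric lemma proved in \cite{Jerison3} and invoked at the beginning of this section. Once this refinement is in place, the preceding estimates yield $y_t\in B[h+tf]$ and hence the reverse inequalities, completing the proof for both $t\to0^+$ and $t\to0^-$.
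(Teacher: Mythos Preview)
The paper does not prove Lemma~\ref{lemma4.2}: it explicitly states that Lemmas~\ref{lemma4.1} and~\ref{lemma4.2} are quoted without proof from \cite{Jerison3}. So there is no in--paper argument to compare against, and your proposal must stand on its own.

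The easy half of your argument is correct: $h_t\le h+tf$ follows immediately from the definition of $B[h+tf]$, and this gives the $\limsup$/$\liminf$ bounds you claim. Your outline for the reverse direction is also sound up to the point you flag as an obstacle, and the obstacle is genuine: when $x$ lies in the relative interior of a flat face $F(X)$, the supporting points $p_t\in F(X_t)$ may accumulate at a vertex $p$ of that face, and then $g_Y^{(t)}\to h(Y)-\langle p,Y\rangle$ vanishes on the entire normal cone $N(p)$, which strictly contains $\{X\}$. Your uniform lower bound on $\sfe\setminus U_\delta$ therefore fails in exactly the way you describe.

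The problem is your resolution. You write that the refined ansatz $y_t=p_t+tv_t$ works ``once this refinement is in place'' and that the existence of a suitable $v_t$ ``is precisely the content of the geometric lemma proved in \cite{Jerison3} and invoked at the beginning of this section''. But the lemmas invoked at the beginning of \S\ref{sec4} are Lemmas~\ref{lemma4.1} and~\ref{lemma4.2} themselves, so as written your argument is circular. If you have in mind a strictly more elementary statement from \cite{Jerison3} (distinct from the present lemma) that produces $v_t$ with $\langle v_t,Y\rangle\le f(Y)-\mathrm{sign}(t)\,\epsilon$ for all $Y\in N(p_t)$ while $\langle v_t,X_t\rangle\ge f(X_t)-\mathrm{sign}(t)\,\epsilon$, you must state it precisely and verify that it applies uniformly as $t\to0$; otherwise the proof is incomplete at exactly the step that carries all the difficulty.
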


\begin{proof}[Proof of Theorem \ref{teo4.1}.] We use the notation of the above
  lemmas; in particular, $h_t$ denotes the support function of $B[h+tf]$. For
  $s\in[0,1]$ consider the set
$$
\Omega_{s,t}=s\,\interno(B[h+tf])+(1-s)\Omega\,.
$$
$\Omega_{s,t}$ is an open bounded convex set; we denote by $h_{s,t}$ and $\nu_{s,t}$ the
support function and the Gauss map of $\cl(\Omega_{s,t})$ respectively, and by
$u_{s,t}$ the solution of (\ref{1.2}) in $\Omega_{s,t}$. By Remark \ref{remark3.3}
we have, for every $t$, 
$$
\tau(\Omega_{1,t})-\tau(\Omega_{0,t})=
\int_0^1\int_{\partial\Omega_{s,t}}|\nabla u_{s,t}(x)|^2
(h_t(\nu_{s,t}(x))-h(\nu_{s,t}(x)))\,d\mh(x)\,ds\,.
$$
On the other hand
$$
\Omega_{1,t}=\interno(B[h+tf])\,,\quad\mbox{and}\quad
\Omega_{0,t}=\Omega\,,
$$
so that
\begin{equation}\label{4.4}
\frac{\tau(\interno(B[h+tf]))-\tau(\Omega)}{t}=
\int_0^1\int_{\partial\Omega_{s,t}}|\nabla u_{s,t}(x)|^2
\frac{h_t(\nu_{s,t}(x))-h(\nu_{s,t}(x))}{t}\,d\mh(x)\,ds\,.  
\end{equation}
In the rest of the proof we will compute the limit, as $t\to0$, of the right
hand--side of (\ref{4.4}). Note that there are constant $r_1,R_1>0$, independent
of $s$ and $t$, such that
\begin{equation}\label{4.5}
  B_{r_1}\subset\Omega_{s,t}\subset B_{R_1}\,,
\end{equation}
for every $s\in[0,1]$ and $t$, where $B_{r_1}$ and $B_{R_1}$ are balls centered at the
origin with radii $r_1$ and $R_1$ respectively. Let $\rho_{s,t}$ and $r_{s,t}$ be
the radial map and the radial function of $\cl(\Omega_{s,t})$ respectively and
let 
$$
R_{s,t}=\frac{\rho^n_{s,t}(\theta)}{h_{s,t}(\nu_{s,t}(\rho_{s,t}(\theta)))}\,,
\quad\theta\in\sfe\,,
$$
be the Jacobian of the change of variable given by the radial map (see
(\ref{3.3}) in \S \ref{sec3}). By (\ref{4.5}) and Lemma \ref{lemma2.1}
there is a constant $C>0$, independent of $s$ and 
$t$, such that
\begin{equation}\label{4.5b}
\frac{1}{C}\le R_{s,t}\le C\quad\mbox{$\mh$--a.e. on $\sfe$ and}
\quad|\nabla u_{s,t}|\le C\quad\mbox{$\mh$--a.e. on $\partial\Omega_{s,t}$.}
\end{equation}
Note that for every $s\in[0,1]$, $\cl(\Omega_{s,t})$ converges to
$\cl(\Omega)$ as $t\to0$. Hence we may apply Theorem \ref{teo3.2} choosing
$h^\prime\equiv1$ (i.e. $\Omega^\prime=$ unit ball) and get
$$
\lim_{t\to0}\int_{\partial\Omega_{s,t}}|\nabla u_{s,t}|^2\,d\mh=
\int_{\partial\Omega}|\nabla u|^2\,d\mh\,,
$$
or, equivalently,
\begin{equation}\label{4.6}
\lim_{t\to0}\int_{\sfe}|\nabla u_{s,t}(r_{s,t}(\theta))|^2R_{s,t}(\theta)\,d\mh(\theta)=
\int_{\sfe}|\nabla u(r(\theta))|^2\,R(\theta)d\mh(\theta)\,,  
\end{equation}
where $r$ is the radial map of $\Omega$ and $R$ is the corresponding
Jacobian. Let
$$
g_{s,t}(\theta)=\frac{
h_t(\nu_{s,t}(r_{s,t}(\theta)))-
h(\nu_{s,t}(r_{s,t}(\theta)))}{t}\,,
\quad\theta\in\sfe\,.
$$
By Lemma \ref{lemma4.1} there exist a
constant $C_1$ independent of $s\in[0,1]$ and $t$ such that
\begin{equation}\label{4.7}
|g_{s,t}(\theta)|\le C_1\quad\forall\theta\in\sfe\,,  
\end{equation}
and, by Lemma \ref{lemma4.2},
\begin{equation}\label{4.8}
\lim_{t\to0}g_{s,t}(\theta)=f(\nu(r(\theta)))\,,\quad\mbox{for $\mh$--a.e. $\theta\in\sfe$.}  
\end{equation}
For every $s\in[0,1]$ we have
\begin{eqnarray*}
\int_{\partial\Omega_{s,t}}|\nabla
u_{s,t}(x)|^2\frac{h_t(\nu_{s,t}(x))-h(\nu_{s,t}(x))}{t}\,d\mh(x)=\\
\int_{\sfe}g_{s,t}(\theta)
|\nabla u_{s,t}(r_{s,t}(\theta))|^2R_{s,t}(\theta)\,d\mh(\theta)\,.
\end{eqnarray*}
From (\ref{4.5b})--(\ref{4.8}) and H\"older inequality it is not hard to deduce
\begin{eqnarray*}
\lim_{t\to0}\int_{\sfe}g_{s,t}(\theta)
|\nabla u_{s,t}(r_{s,t}(\theta))|^2R_{s,t}(\theta)\,d\mh(\theta)&=& 
\int_{\sfe}f(\nu(r(\theta)))|\nabla u(r(\theta))|^2\,R(\theta)d\mh(\theta)\\
&=&\int_{\partial\Omega}f(\nu(x))|\nabla u(x)|^2\,d\mh(x)\,.
\end{eqnarray*}
On the other hand, (\ref{4.5b}) and (\ref{4.7}) ensure that
$$
\left|\int_{\sfe}g_{s,t}(\theta)
|\nabla u_{s,t}(r_{s,t}(\theta))|^2R_{s,t}(\theta)\right|\le C_2\,,
$$
for some constant $C_2>0$ independent of $s$ and $t$. Consequently we may apply
the Dominated Convergence Theorem to the right hand--side of (\ref{4.4}) and
we obtain that
\begin{eqnarray*}
\lim_{t\to0}\frac{\tau(\interno(B[h+tf]))-\tau(\Omega)}{t}&=&
\int_0^1\int_{\partial\Omega}f(\nu(x))|\nabla u(x)|^2\,d\mh(x)\\
&=&\int_{\partial\Omega}f(\nu(x))|\nabla u(x)|^2\,d\mh(x)\,.
\end{eqnarray*}
\end{proof}

\section{Proof of Theorem \ref{teo1.1}}

Let $\mu$ be a non--negative Borel measure on $\sfe$ such that the assumptions
of Theorem \ref{teo1.1} are fulfilled. In particular, there exists a convex
body $\tilde K$ in $\R^n$, uniquely determined up to translations, with
non--empty interior, such that $\mu$ is the 
{\em area measure} $\sigma_{\tilde K}$ of $K$; in other words $\tilde K$ is
the solution of the classical Minkowski problem for $\mu$. We refer the reader
to \cite[Chapter 4]{Schneider} for the definition of area measure and to
\cite[Theorem 7.1.2]{Schneider} for the existence of $\tilde K$. For
simplicity, we may assume that 
$0\in\interno(\tilde K)$. Let $r>0$ be such that $\tilde K$ contains $B_r$, the ball
centered at $0$ with radius $r$. Note that $r$ depends on $\mu$ only. Throughout
this section $C$ will denote a generic positive constant depending on $\mu$,
the dimension $n$ and the constant $M$ appearing later on in (\ref{5.3}). For a fixed $X\in\sfe$ 
$$
\frac{1}{2}\int_{\sfe}|\langle
X,Y\rangle|\,d\mu(Y)=\frac{1}{2}\int_{\sfe}|\langle X,Y\rangle|
\,d\sigma_{\tilde K}(Y)
$$
is the $(n-1)$--dimensional Hausdorff measure of the orthogonal projection of
$\tilde K$ onto $X^\perp$. As $\tilde K\supset B_r$, we have 
\begin{equation}\label{5.1}
\int_{\sfe}|\langle X,Y\rangle|\,d\mu(Y)\ge C\quad\forall\,X\in\sfe\,.
\end{equation}
Now let $\Omega$ be an open bounded convex subset of $\R^n$ and let $h$ be the
support function of $K=\cl(\Omega)$. Assume that
\begin{itemize}
\item[{\em i}.]
\begin{equation}\label{5.2}
\tau(\Omega)\ge 1\,;
\end{equation}
\item[{\em ii}.]
\begin{equation}\label{5.3}
\int_{\sfe}h(X)\,d\mu(X)\le M\,,    
\end{equation}
for some positive constant $M$.
\end{itemize}
Condition {\em i}. and the monotonicity of the torsional rigidity imply
\begin{equation}\label{5.4}
\diam(\Omega)\ge C\,.  
\end{equation}
Let $d=\diam(\Omega)$; we prove that
$d\le C$ using the same argument appearing in \cite[p. 273]{Jerison2}, that we quote for
completeness. Without loss of generality we may assume that for some
$\bar X\in\sfe$, $\pm\dfrac{d}{2}\bar X\in\partial\Omega$. Then for every $X\in\sfe$ we
have
$$
h(X)=\sup_{Y\in\sfe}\langle Y,X\rangle\ge\max\left\{\pm\frac{d}{2}\langle
  X,\bar X\rangle\right\}=\frac{d}{2}|\langle X,\bar X\rangle|\,.
$$
Hence
\begin{equation}\label{5.5b}
M\ge\int_{\sfe}h(X)\,d\mu(X)\ge\frac{d}{2}\int_{\sfe}|\langle X,\bar
X\rangle|\,d\mu(X)\ge C\,.
\end{equation}
From (\ref{5.1}) we obtain
\begin{equation}\label{5.6}
\diam(\Omega)\le C\,.  
\end{equation}
Let $\cal E$ be the so--called {\em John ellipsoid} of $\Omega$, i.e. $\cal E$
is the ellipsoid of minimal volume containing $\cl(\Omega)$. We have that (see
e.g. \cite[Lemma 5.3]{Jerison2}): 
\begin{equation}\label{5.6b}
C{\cal E}\subset\cl(\Omega)\subset{\cal E}\,.
\end{equation}
Let $0<a_1\le a_2\le\dots\le a_n$ be the lengths of the semi--major axes of
$\cal E$. By (\ref{5.6})
\begin{equation}\label{5.7}
a_n\le C\,.  
\end{equation}
Let $u$ be the solution of problem (\ref{1.2}) in $\Omega$; by (\ref{5.6}) and
Lemma \ref{lemma2.1}, $|\nabla u(x)|\le C$ for every $x\in\Omega$. Hence
$$
1\le\int_\Omega|\nabla u(x)|^2\,dx\le C\ml({\cal E})\le C a_1\,,
$$
where we have used (\ref{5.7}). This implies that $a_1\ge C$. By the inclusion
(\ref{5.6b}), we have that $\Omega$ contains a ball of radius depending
only on $n$, $\mu$ and $M$. We collect the facts proved so far in this section
in the following statement.

\begin{prop}\label{prop5.1} Let $\mu$ be as in Theorem \ref{teo1.1} and let
  $\Omega$ be an open bounded subset of $\R^n$ which verifies conditions 
  i. and ii. Then there exist $x_0\in\R^n$ and two positive numbers $r,R$
  depending only on $n$, $\mu$ and $M$ such that
\begin{equation}\label{5.8}
B(x_0,r)\subset\Omega\subset B(x_0,R)\,,
\end{equation}
where $B(x_0,r)$ and $B(x_0,R)$ denote the balls centered at $x_0$ with radius
$r$ and $R$ respectively.  
\end{prop}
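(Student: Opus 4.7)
The plan is to assemble the three estimates already established in this section into the symmetric form \eqref{5.8}. I would split the argument into a lower bound on $\diam(\Omega)$ coming from the energy hypothesis $\tau(\Omega)\ge 1$; an upper bound on $\diam(\Omega)$ coming from \eqref{5.3}, whose essential ingredient is the chord inequality \eqref{5.1} for $\mu$; and finally a lower bound on the inradius extracted from the John ellipsoid by combining the $L^\infty$--gradient estimate of Lemma \ref{lemma2.1} with $\tau(\Omega)\ge 1$.

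The first step is routine: monotonicity of $\tau$ under inclusion together with the scaling $\tau(s\Omega)=s^{n+2}\tau(\Omega)$ from Remark \ref{remark2.1} shows that a convex set with torsional rigidity $\ge 1$ has diameter bounded below by a constant depending only on $n$. For the upper bound I would invoke the classical Minkowski problem (see \cite[Theorem 7.1.2]{Schneider}) to produce a convex body $\tilde K$ with area measure $\mu$, and, after translation, assume $0\in\interno(\tilde K)$; since $\tilde K\supset B_r$ for some $r=r(\mu)>0$, the projection interpretation of the area measure yields \eqref{5.1}. If $\bar X\in\sfe$ realises the diameter $d$ of $\Omega$ (so that $\pm(d/2)\bar X\in\partial\Omega$ after a further translation), then $h(X)\ge (d/2)|\langle X,\bar X\rangle|$ for every $X$; integrating this against $\mu$ and comparing with \eqref{5.1} and the hypothesis \eqref{5.3} forces $d\le C$.

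For the lower bound on the inradius I would pass to the John ellipsoid $\mathcal E$ of $\cl(\Omega)$, which satisfies $C\mathcal E\subset\cl(\Omega)\subset\mathcal E$. Writing its semi--axes as $0<a_1\le\cdots\le a_n$, the diameter bound yields $a_n\le C$. Lemma \ref{lemma2.1} then gives $|\nabla u|\le \diam(\Omega)\le C$ on $\Omega$, so that $1\le\tau(\Omega)\le C\,\ml(\Omega)\le C\,\ml(\mathcal E)\le C\,a_1 a_n^{n-1}\le C\,a_1$, which forces $a_1\ge C$. Taking $x_0$ to be the centre of $\mathcal E$ then furnishes the inscribed ball of radius proportional to $a_1$, while $\cl(\Omega)\subset\mathcal E$ provides the circumscribed ball of radius proportional to $a_n$. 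The most delicate step is the upper bound on $\diam(\Omega)$, since it is the only one that actually exploits the geometric structure of $\mu$ (through its representation as an area measure of a body with a definite inner ball); all remaining steps are elementary manipulations of the bounds already collected in \S\ref{sec2}.
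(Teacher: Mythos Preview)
Your proposal is correct and follows the paper's own argument essentially step for step: the same use of the classical Minkowski solution $\tilde K$ to obtain the chord estimate \eqref{5.1}, the same diameter bound via $h(X)\ge (d/2)|\langle X,\bar X\rangle|$, and the same John ellipsoid argument combined with Lemma~\ref{lemma2.1} to bound $a_1$ from below. The only cosmetic addition is that you make the choice of $x_0$ (the centre of $\mathcal E$) explicit, which the paper leaves implicit.
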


\begin{proof}[Proof of Theorem \ref{teo1.1}] Let us consider the following variational problem
\begin{equation}\label{5.9}
m=\inf\left\{\int_{\sfe}h_L(X)\,d\mu(X)\,|\,
\mbox{$L$ is a convex body, $\interno(L)\ne\emptyset$, $\tau(\interno(L))\ge 1$}\right\}\,. 
\end{equation}
Let $L_i$, $i\in\N$, be a minimizing sequence and let
$\Omega_i=\interno(L_i)$. Each $\Omega_i$ verifies (\ref{5.8}) for some
$x_0=x_0^i$; on the other hand, by (\ref{1.1}) the integral appearing in
(\ref{5.9}) is translation invariant and we may assume that  $x_0^i=0$ for
every $i\in\N$. By the Blaschke selection Theorem, a subsequence of $L_i$
converges to a convex body $K$ in the Hausdorff metric. For simplicity we
assume that the sequence $L_i$ itself converges to $K$. By (\ref{5.8}), $K$
has non--empty interior; let $\Omega=\interno(K)$. Using the 
continuity of $\tau$ and the uniform convergence of $h_{L_i}$ to $h_K$ as
$i\to\infty$ we obtain 
\begin{equation*}
\tau(\Omega)\ge1\quad\mbox{and}\quad
\int_{\sfe}h_K(X)\,d\mu(X)=m>0\,,
\end{equation*}
i.e. $K$ is a minimizer. Notice that if $\tau(\Omega)>1$, then we could choose
a suitable $s<1$ such that 
$$
\tau(s\Omega)\ge 1\quad\mbox{and}\quad\int_{\sfe}h_{K_s}(X)\,d\mu(X)
=s\int_{\sfe}h_K(X)\,d\mu(X)<m
$$
(with $K_s=\cl(s\Omega)$), i.e. a contradiction. Hence
$\tau(\Omega)=1$. Consider $f\in C(\sfe)$ and let $\bar t>0$ be such that
$h_t=h+tf>0$ on $\sfe$, for every $t\in(-\bar t,\bar t)$ (here, as usual, $h$
is the support function of $K$). Let
$$
K_t=\frac{1}{[\tau(\interno(B[h_t])]^{1/(n+2)}}
B[h_t]\,,\quad \Omega_t=\interno(K_t)\,,\quad t\in(-\bar t,\bar t)
$$
(see \S \ref{sec4} for the definition of $B[h_t]$). Note that $\Omega_0=\Omega$
and $\tau(\Omega_t)=1$  for $t\in(-\bar t,\bar t)$, hence the function 
$$
\phi(t)=\int_{\sfe}h_{K_t}(X)\,d\mu(X)=
\frac{1}{[\tau(\Omega_t)]^{1/(n+2)}}\int_{\sfe}h_{B[h_t]}(X)\,d\mu(X)\,,\quad
t\in(-\bar t,\bar t)\,,  
$$
has a minimum for $t=0$. 
An easy consequence of Lemmas \ref{lemma4.1} and \ref{lemma4.2} is
\begin{equation}\label{5.10}
\left.\frac{d}{dt}\int_{\sfe}h_{B[h_t]}(X)\,d\mu(X)\right|_{t=0}=
\int_{\sfe}f(X)\,d\mu(X)\,.
\end{equation}
Hence, by Theorem \ref{teo4.1}, $\phi$ is differentiable at $t=0$ and we have
$\phi^\prime(0)=0$. Applying Corollary 
\ref{cor4.0} and (\ref{5.10}) we deduce
\begin{equation*}
\int_{\sfe}f(X)\,d\mu_{\tau,\Omega}(X)=
m\,\int_{\sfe}f(X)\,d\mu(X)\,,  
\end{equation*}
where $\mu_{\tau,\Omega}$ is the measure associated to $\Omega$ as in Definition
\ref{def2.1}. As $f\in C(\sfe)$ is arbitrary, we have
$\mu_{\tau,\Omega}=m\,\mu$ and, as $\mu_{\tau,\Omega}$ is positively
homogeneous of order $(n+1)$ with respect to dilations of $\Omega$, 
$$
\tilde\Omega=\frac{1}{m^{1/(n+1)}}\,\Omega
$$
verifies $\mu_{\tau,\tilde\Omega}=\mu$. This concludes the proof of the theorem. 
\end{proof}

\bigskip

\noindent
{\sc Andrea Colesanti}\\
Dipartimento di Matematica 'U. Dini' -- Universit\`a di Firenze\\
Viale Morgagni 67/a\\
50134 Firenze, Italy\\
{\tt colesant@math.unifi.it}

\medskip

\noindent
{\sc Michele Fimiani}\\
Dipartimento di Matematica 'U. Dini' -- Universit\`a di Firenze\\
Viale Morgagni 67/a\\
50134 Firenze, Italy\\

\end{document}